\newtheorem{theorem}{Theorem}
\theoremstyle{plain}
\newtheorem{conclusion}{Conclusion}
\newtheorem{corollary}{Corollary}
\newtheorem{definition}{Definition}
\newtheorem{example}{Example}
\newtheorem{proposition}{Proposition}
\newtheorem{remark}{Remark}
\newtheorem{summary}{Summary}
\numberwithin{equation}{section}
\begin{document}
\title{Separation axioms in bi-soft Topological Spaces}
\author{Munazza Naz}
\address{Department of Mathematics, Fatima Jinnah Women University, The
Mall, Rawalpindi}
\email{munazzanaz@yahoo.com}
\author{Muhammad Shabir}
\address{Department of Mathematics, Quaid-i-Azam University, Islamabad}
\email{mshabirbhatti@yahoo.co.uk}
\author{Muhammad Irfan Ali}
\address{Department of Mathematics, Islamabad Model College for Boys F-7/3,
Islamabad, Pakistan.}
\email{mirfanali13@yahoo.com}
\keywords{Bitopological Spaces, Soft Topology, Soft Sets, Soft Open Sets,
Soft Closed Sets, Separation Axioms.}
\subjclass[2000]{Primary 05C38, 15A15; Secondary 05A15, 15A18}

\begin{abstract}
Concept of bi-soft topological spaces is introduced. Several notions of a
soft topological space are generalized to study bi-soft topological spaces.
Separation axioms play a vital role in study of topological spaces. These
concepts have been studied in context of bi-soft topological spaces. There
is a very close relationship between topology and rough set theory. An
application of bi-soft topology is given in rough set theory.
\end{abstract}

\maketitle

\section{Introduction}

Soft set theory, initiated by Molodtsov$^{\text{\cite{Molod}}}$, is a novel
concept and a completely new approach for modeling vagueness and
uncertainty, which occur in real world problems. Applications of soft set
theory in many disciplines and real-life problems, have established their
role in scientific literature. Many researchers are working in this very
important area. Molodtsov suggests many directions for the applications of
soft sets in his seminal paper \cite{Molod}, which include smoothness of
functions, game theory, Riemann integration, Perron integration and theory
of measurement. Some important applications of soft sets are in information
systems and decision making problems can be seen $^{\text{\cite{Maji 1}\cite%
{Pie}}}$. These concepts are of utmost importance in artificial intelligence
and computer science. Algebraic structures of soft sets have been discussed
in $^{\text{\cite{Ali 1}, \cite{Ali 2}, \cite{Maji 2}}}$. Concept of Soft
topological spaces is introduced in $^{\text{\cite{Shabir}}}$, where soft
separation axioms have been studied as well. Further contributions to the
same concepts have been added by many authors in $^{\text{\cite{Bashir}, 
\cite{Cagman}, \cite{Min}}}$.

Topology is an important branch of mathematics. Separation axioms in
topology are among the most beautiful and interesting concepts. Various
generalizations of separation axioms have been studied for generalized
topological spaces. It is interesting to see that when classical notions are
replaced by new generalized concepts, several new results emerge. Kelly$^{%
\text{\cite{Kelly}}}$ introduced the concept of bitopological spaces and
studied the separation properties for bitopological spaces. These separation
axioms are actually pair-wise separation axioms. In later years, many
researchers studied bitopological spaces$^{\text{\cite{Kim}, \cite{Lal}, 
\cite{Lane}, \cite{Murdesh}, \cite{Patty}, \cite{Pervin}, \cite{Reilly}, 
\cite{Singal}}}$ due to the richness of their structure and potential for
carrying out a wide scope for the generalization of topological results in
bitopological environment. Our present work is also a continuation of this
trend.

In the present paper, concept of soft topological spaces have been
generalized to initiate the study of bi-soft topological spaces. In section %
\ref{Pre}, some preliminary concepts about bitopological spaces and soft
topological spaces are given. Section \ref{Bisofttopo} is devoted for the
study of bi-soft topological spaces. The basic structure of a bi-soft
topological space over an initial universal set $X$, with a fixed set of
parameters has been given. The concept of pair-wise soft separation axioms
for bi-soft topological spaces is studied section \ref{Sep}. Properties of
pair-wise soft $T_{0}$, $T_{1}$, and $T_{2}-$spaces and their relations with
the corresponding soft $T_{0}$, $T_{1}$, and $T_{2}-$spaces have been
discussed here. Main goal of this paper is to study the implications of
these generalized separation axioms in soft and crisp cases. Several results
in this regards have been presented. This study focuses on question: If a
pair-wise soft $T_{i}-$space $(i=0$, $1$, $2)$, say $(X,\mathcal{T}_{1},%
\mathcal{T}_{2},E)$, over a ground set $X$ is given, what can be said about
the situations,

\begin{enumerate}
\item both $(X,\mathcal{T}_{1},E)$ and $(X,\mathcal{T}_{2},E)$\ are soft $%
T_{i}-$spaces,

\item $(X,\mathcal{T}_{1}\vee \mathcal{T}_{2},E)$ is a soft $T_{i}-$space,

\item the parameterized bitopological spaces $(X,\mathcal{T}_{1e},\mathcal{T}%
_{2e})$ are $T_{i}-$spaces for all $e\in E$,

\item bi-soft subspaces $(Y,\mathcal{T}_{1Y},\mathcal{T}_{2Y},E)\ $are $%
T_{i}-$spaces for $\emptyset \neq Y\subset X$.
\end{enumerate}

Furthermore characterizations theorem is proved for pair-wise soft Hausdorff
space. Finally in section \ref{appli} an application of bi-soft topological
spaces is suggested in rough set theory.

\section{Preliminaries\label{Pre}}

In this section some basic concepts about bitopological spaces and soft
topological spaces are presented.

\begin{definition}
$^{\text{\cite{Kelly}}}$A \textit{bitopological space} is the triplet $(X,%
\mathcal{P},\mathcal{Q})$ where $X$ is a non-empty set, $\mathcal{P}$ and $%
\mathcal{Q}$ are two topologies on $X$.
\end{definition}

\begin{definition}
$^{\text{\cite{Murdesh}}}$A \textit{bitopological space} $(X,\mathcal{P},%
\mathcal{Q})$ is said to be pair-wise $T_{0}$ if for each pair of distinct
points of $X$, there is a $\mathcal{P}$-open set or a $\mathcal{Q}$-open set
containing one of the points, but not the other.
\end{definition}

\begin{definition}
$^{\text{\cite{Reilly}}}$A \textit{bitopological space} $(X,\mathcal{P},%
\mathcal{Q})$ is said to be pair-wise $T_{1}$, if for each pair of distinct
points $x$, $y$ there exist $U\in \mathcal{P}$, $V\in \mathcal{Q}$ such that 
$x\in U$, $y\notin V$ and $x\notin U$, $y\in V$.
\end{definition}

\begin{definition}
$^{\text{\cite{Kelly}}}$A \textit{bitopological space} $(X,\mathcal{P},%
\mathcal{Q})$ is said to be pair-wise $T_{2}$, if given distinct points $x$, 
$y\in X$, there exist $U\in \mathcal{P}$, $V\in \mathcal{Q}$ such that $x\in
U$, $y\in V$, $U\cap V=\emptyset $.
\end{definition}

In the following some concepts about soft sets and soft topological spaces
are given.

Let $X$ be an initial universe set and $E$ be the non-empty set of
parameters.

\begin{definition}
$^{\text{\cite{Molod}}}$ Let $U$ be an initial universe and $E$ be a set of
parameters. Let $\mathcal{P}(X)$ denotes the power set of $X$ and $A$ be a
non-empty subset of $E$. A pair $(F$,$A)$\ is called a soft set over $X$,
where $F$ is a mapping given by $F:A\rightarrow \mathcal{P}(X)$.
\end{definition}

In other words, a soft set over $X$ is a parametrized family of subsets of
the universe $X$. For $\varepsilon \in A$, $F(\varepsilon )$ may be
considered as the set of $\varepsilon -$approximate elements of the soft set 
$(F$,$A)$. Clearly, a soft set is not a set.

\begin{definition}
$^{\text{\cite{Ali 1}}}$ For two soft sets $(F$,$A)\ $and $(G$,$B)$ over a
common universe $X$, we say that $(F$,$A)$ is a \textit{soft subset} of $(G$,%
$B)$ if

\begin{enumerate}
\item $\ A\subseteq B$ and

\item $F(e)\subseteq G(e)$, for all $e\in A$.
\end{enumerate}
\end{definition}

We write $(F$,$A)\widetilde{\subset }(G$,$B)$.

$(F$,$A)$ is said to be a soft super set of $(G$,$B)$, if $(G$,$B)$ is a
soft subset of $(F$,$A)$. We denote it by $(F$,$A)\widetilde{\supset }(G$,$%
B) $.

\begin{definition}
$^{\text{\cite{Ali 1}}}$A soft set $(F$,$A)$ over $X$ is said to be a 
\textit{NULL} soft set denoted by $\Phi $\ if for all $\varepsilon \in A$, $%
F(\varepsilon )=$ $\emptyset $ $($null set$)$.
\end{definition}

\begin{definition}
$^{\text{\cite{Ali 1}}}$ A soft set $(F$,$A)$ over $X$ is said to be \textit{%
absolute} soft set denoted by $\widetilde{A}$\ if for all $e\in A$, $F(e)=$ $%
X$.
\end{definition}

\begin{definition}
$^{\text{\cite{Ali 1}}}$The \textit{Union of two soft sets} $(F$,$E)$ and $%
(G $,$E)$ over the common universe $X$\ is the soft set $(H$,$E)$, where $%
H(e)=F(e)\cup G(e)$ for all $e\in E$. We write $(F,E)\cup (G,E)=(H,E)$.
\end{definition}

\begin{definition}
$^{\text{\cite{Ali 1}}}$\textit{The intersection} of two soft sets $(F,E)$
and $(G,E)$ over a common universe $X$,\ is a soft set $(H,E)=(F,E)\cap
(G,E) $, defined by $H(e)=F(e)\cap G(e)$ for all $e\in E$.
\end{definition}

\begin{definition}
$^{\text{\cite{Shabir}}}$The difference $(H$,$E)$ of two soft sets $(F$,$E)$
and $(G$,$E)$ over $X$,\ denoted by $(F,E)-(G,E)$, is defined as $%
H(e)=F(e)-G(e)$ for all $e\in E$,
\end{definition}

\begin{definition}
$^{\text{\cite{Shabir}}}$Let $(F$,$E)$ be a soft set over $X$ and $x\in X$.
We say that $x\in (F,E)$ read as $x$ belongs to the soft set $(F$,$E)$
whenever $x\in F(\alpha )$ for all $\alpha \in E$.
\end{definition}

Note that for any $x\in X$, $x\notin (F$,$E)$, if $x\notin F(\alpha )$ for
some $\alpha \in E$.

\begin{definition}
$^{\text{\cite{Shabir}}}$Let $Y$ be a non-empty subset of $X$, then $%
\widetilde{Y}$ denotes the soft set $(Y,E)$ over $X$ for which $Y(\alpha )=Y$%
, for all $\alpha \in E$.
\end{definition}

In particular, $(X,E)$\ will be denoted by $\widetilde{X}$.

\begin{definition}
$^{\text{\cite{Shabir}}}$Let $x\in X$. Then $(x$,$E)$ denotes the soft set
over $X$ for which $x(\alpha )=\{x\}$, for all $\alpha \in E$.
\end{definition}

\begin{definition}
$^{\text{\cite{Shabir}}}$Let $(F$,$E)$ be a soft set over $X$ and $Y$\ be a
non-empty subset of $X$. Then the soft subset of $(F$,$E)$ over $Y$ denoted
by $(^{Y}F$,$E)$, is defined as follows%
\begin{equation*}
^{Y}F(\alpha )=Y\cap F(\alpha ),\text{ for all }\alpha \in E
\end{equation*}%
In other words $(^{Y}F,E)=\widetilde{Y}\cap (F,E)$.
\end{definition}

\begin{definition}
$^{\text{\cite{Shabir}}}$The\textit{\ complement of a soft set} $(F,E)$ is
denoted by $(F,E)^{c}$ and is defined by $(F,E)^{c}=(F^{c},E)$ where $%
F^{c}:E\rightarrow \mathcal{P}(X)$ is a mapping given by%
\begin{equation*}
F^{c}(\alpha )=X-F(\alpha )\text{ for all}\ \alpha \in E.
\end{equation*}
\end{definition}

\begin{proposition}
$^{\text{\cite{Shabir}}}$Let $(F,E)$ and$\ (G,E)$ be the soft sets over $X$.
Then

\begin{enumerate}
\item $((F,E)\cup (G,E))^{c}=(F,E)^{c}\cap (G,E)^{c}$,

\item $((F,E)\cap (G,E)^{c}=(F,E)^{c}\cup (G,E)^{c}$.
\end{enumerate}
\end{proposition}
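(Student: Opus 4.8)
The plan is to reduce both identities to the classical De Morgan laws for ordinary subsets of $X$, applied separately for each parameter $e\in E$. Since equality of two soft sets over $X$ with common parameter set $E$ amounts to equality of the associated sets $H(e)$ for every $e\in E$, it suffices to verify the underlying set-theoretic identities parameter-wise.

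For part (1), first I would unwind the left-hand side using the definitions of union and complement. Writing $(F,E)\cup(G,E)=(H,E)$ with $H(e)=F(e)\cup G(e)$, the complement gives $((F,E)\cup(G,E))^{c}=(H^{c},E)$, where $H^{c}(e)=X-\bigl(F(e)\cup G(e)\bigr)$ for all $e\in E$. Next I would unwind the right-hand side: $(F,E)^{c}\cap(G,E)^{c}=(F^{c},E)\cap(G^{c},E)=(K,E)$, where $K(e)=\bigl(X-F(e)\bigr)\cap\bigl(X-G(e)\bigr)$ for all $e\in E$. The classical identity $X-(A\cup B)=(X-A)\cap(X-B)$, applied with $A=F(e)$ and $B=G(e)$, then yields $H^{c}(e)=K(e)$ for every $e\in E$, which is exactly the claimed equality of soft sets.

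Part (2) follows by the same three-step routine, now invoking the dual classical identity $X-(A\cap B)=(X-A)\cup(X-B)$: expand $(F,E)\cap(G,E)$, take its complement, expand $(F,E)^{c}\cup(G,E)^{c}$, and compare the resulting parameter-wise sets.

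Since every step is a direct substitution of definitions followed by a pointwise application of a known crisp identity, there is no genuine obstacle here; the only point requiring care is to keep the parameter $e$ fixed throughout each computation so that the reduction to the classical De Morgan laws is legitimate. I would also note that the statement as printed in part (2) appears to carry a typographical bracketing slip and should read $\bigl((F,E)\cap(G,E)\bigr)^{c}=(F,E)^{c}\cup(G,E)^{c}$.
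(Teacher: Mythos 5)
Your argument is correct and is exactly the standard parameter-wise reduction to the classical De Morgan laws; the paper itself states this proposition as a preliminary imported from \cite{Shabir} without reproducing a proof, and the proof given there is the same definitional unwinding you describe. You are also right that part (2) contains a typographical bracketing slip and should read $((F,E)\cap (G,E))^{c}=(F,E)^{c}\cup (G,E)^{c}$.
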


\begin{definition}
$^{\text{\cite{Shabir}}}$Let $\mathcal{T}$ be the collection of soft sets
over $X$. Then $\mathcal{T}$ is said to be a soft topology on $X$ if

\begin{enumerate}
\item $\Phi $, $\widetilde{X}$ belong to $\mathcal{T}$

\item the union of any number of soft sets in $\mathcal{T}$\ belongs to $%
\mathcal{T}$

\item the intersection of any two soft sets in $\mathcal{T}$\ belongs to\ $%
\mathcal{T}$.
\end{enumerate}

The triplet $(X$,$\mathcal{T}$,$E)$ is called a soft topological space over $%
X$.
\end{definition}

\begin{example}
$^{\text{\cite{Shabir}}}$Let $X=\{h_{1}$,$h_{2}$,$h_{3}\}$, $E=\{e_{1}$,$%
e_{2}\}$ and

$\mathcal{T}=\{\Phi $,$\widetilde{X}$,$(F_{1}$,$E)$,$(F_{2}$,$E)$,$(F_{3}$,$%
E)$,$(F_{4}$,$E)$, $(F_{5}$,$E)\}$ where $(F_{1}$,$E)$,$(F_{2}$,$E)$,$(F_{3}$%
,$E)$,$(F_{4}$,$E)$, and $(F_{5}$,$E)$ are soft sets over $X$, defined as
follows:%
\begin{equation*}
\begin{array}[t]{ll}
F_{1}(e_{1})=\{h_{2}\}, & F_{1}(e_{2})=\{h_{1}\}, \\ 
F_{2}(e_{1})=\{h_{2},h_{3}\}, & F_{2}(e_{2})=\{h_{1},h_{2}\}, \\ 
F_{3}(e_{1})=\{h_{1},h_{2}\}, & F_{3}(e_{2})=X, \\ 
F_{4}(e_{1})=\{h_{1},h_{2}\}, & F_{4}(e_{2})=\{h_{1},h_{3}\}, \\ 
F_{5}(e_{1})=\{h_{2}\}, & F_{4}(e_{2})=\{h_{1},h_{2}\}.%
\end{array}%
\end{equation*}%
Then $\mathcal{T}$\ defines a soft topology on $X$ and hence $(X$,$\mathcal{T%
}$,$E)$ is a soft topological space over $X$.
\end{example}

\begin{definition}
$^{\text{\cite{Shabir}}}$Let $(X$,$\mathcal{T}$,$E)$ be a soft topological
space over $X$. Then the members of$\ \mathcal{T}$ are said to be soft open
sets in $X$.
\end{definition}

\begin{definition}
$^{\text{\cite{Shabir}}}$Let $(X$,$\mathcal{T}$,$E)$ be a soft topological
space over $X$. A soft set $(F,E)$ over $X$ is said to be a soft closed set
in $X$, if its relative complement $(F,E)^{c}$ belongs to $\mathcal{T}$.
\end{definition}

\begin{definition}
\label{softcl}$^{\text{\cite{Shabir}}}$Let $(X$,$\mathcal{T}$,$E)$ be a 
\textit{soft topological space }over $X$ and $(F$,$E)$ be a soft set over $X$%
. Then the \textit{soft closure} of $(F$,$E)$, denoted by $\overline{(F,E)}$
is the intersection of all soft closed super sets of $(F$,$E)$.
\end{definition}

\begin{definition}
$^{\text{\cite{Shabir}}}$Let $(X$,$\mathcal{T}$,$E)$ be a \textit{soft
topological space} over $X$ and $Y$\ be a non-empty subset of $X$. Then%
\begin{equation*}
\mathcal{T}_{Y}=\{\text{ }(^{Y}F,E)\text{\ }|\text{ }(F,E)\in \mathcal{T}%
\text{\ }\}
\end{equation*}

is said to be the \textit{soft relative topology} on $Y$ and $(Y$,$\mathcal{T%
}_{Y}$,$E)$\ is called a \textit{soft subspace} of $(X$,$\mathcal{T}$,$E)$.
\end{definition}

\begin{definition}
$^{\text{\cite{Shabir}}}$Let $(X$,$\mathcal{T}$,$E)$ be a soft topological
space over $X$ and $x$,$y\in X$ be such that $x\neq y$. If there exist soft
open sets $(F$,$E)$ and $(G$,$E)$ such that

"$x\in (F$,$E)$ and$\ y\notin (F$,$E)$" or "$y\in (G$,$E)$ and $x\notin (G$,$%
E)$", then $(X$,$\mathcal{T}$,$E)$\ is called a soft $T_{0}-$space.
\end{definition}

\begin{definition}
$^{\text{\cite{Shabir}}}$Let $(X$,$\mathcal{T}$,$E)$ be a soft topological
space over $X$ and $x$,$y\in X$ be such that $x\neq y$. If there exist soft
open sets $(F$,$E)$ and $(G$,$E)$ such that

"$x\in (F$,$E)$ and$\ y\notin (F$,$E)$" and "$y\in (G$,$E)$ and $x\notin (G$,%
$E)$", then $(X$,$\mathcal{T}$,$E)$\ is called a soft $T_{1}-$space.
\end{definition}

\begin{definition}
$^{\text{\cite{Shabir}}}$Let $(X$,$\mathcal{T}$,$E)$ be a soft topological
space over $X$ and $x$,$y\in X$ be such that $x\neq y$. If there exist soft
open sets $(F$,$E)$ and $(G$,$E)$ such that

$x\in (F$,$E)$, $y\in (G$,$E)$ and $(F$,$E)\cap (G$,$E)=\Phi $, then $(X$,$%
\mathcal{T}$,$E)$\ is called a soft $T_{2}-$space.
\end{definition}

\section{Bi-Soft Topological Spaces\label{Bisofttopo}}

In this section study of bi-soft topological spaces is initiated.

\begin{definition}
Let $\mathcal{T}_{1}$ and $\mathcal{T}_{2}$ be two soft topologies on $X$.
Then the quadruple $(X,\mathcal{T}_{1},\mathcal{T}_{2},E)$ is said to be a
bi-soft topological space over $X$.
\end{definition}

\begin{example}
Let $X=\{h_{1},h_{2},h_{3}\}$, $E=\{e_{1},e_{2}\}$. Let%
\begin{eqnarray*}
\mathcal{T}_{1} &=&\{\Phi ,\widetilde{X},(F_{1},E),(F_{2},E)\}\text{, and} \\
\mathcal{T}_{2} &=&\{\Phi ,\widetilde{X}%
,(G_{1},E),(G_{2},E),(G_{3},E),(G_{4},E)\}\text{,}
\end{eqnarray*}%
where $(F_{1},E),(F_{2},E),(G_{1},E),(G_{2},E),(G_{3},E),(G_{4},E)$ are soft
sets over $X$, defined as follows:%
\begin{equation*}
\begin{array}{ll}
F_{1}(e_{1})=\{h_{1}\}, & F_{1}(e_{2})=\{h_{1},h_{2}\}, \\ 
F_{2}(e_{1})=\{h_{1},h_{3}\}, & F_{2}(e_{2})=X,%
\end{array}%
\end{equation*}%
and 
\begin{equation*}
\begin{array}{ll}
G_{1}(e_{1})=\{h_{1}\}, & G_{1}(e_{2})=\{h_{2}\}, \\ 
G_{2}(e_{1})=\{h_{1},h_{2}\}, & G_{2}(e_{2})=\{h_{2}\}, \\ 
G_{3}(e_{1})=\{h_{2}\}, & G_{3}(e_{2})=\{h_{2}\}, \\ 
G_{4}(e_{1})=\{\}, & G_{4}(e_{2})=\{h_{2}\}.%
\end{array}%
\end{equation*}%
Then $\mathcal{T}_{1}$\ and $\mathcal{T}_{2}$ are soft topologies on $X$.
Thus $(X,\mathcal{T}_{1},\mathcal{T}_{2},E)$ is a \textit{bi-soft
topological space over }$X$.
\end{example}

\begin{proposition}
\label{Propref}Let $(X,\mathcal{T}_{1},\mathcal{T}_{2},E)$ be a bi-soft
topological space over $X$. We define%
\begin{eqnarray*}
\mathcal{T}_{1e} &=&\{F(e)|(F,E)\in \mathcal{T}_{1}\} \\
\mathcal{T}_{2e} &=&\{G(e)|(G,E)\in \mathcal{T}_{2}\}
\end{eqnarray*}%
for each $e\in E$. Then $(X,\mathcal{T}_{1e},\mathcal{T}_{2e})$ is a \textit{%
bitopological space}.
\end{proposition}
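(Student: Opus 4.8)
The plan is to verify directly that each of $\mathcal{T}_{1e}$ and $\mathcal{T}_{2e}$ is an ordinary topology on $X$; once both collections are topologies, the triple $(X,\mathcal{T}_{1e},\mathcal{T}_{2e})$ is a bitopological space by the very first definition of the Preliminaries. Since $\mathcal{T}_{1}$ and $\mathcal{T}_{2}$ are both soft topologies, the reasoning for $\mathcal{T}_{1e}$ and for $\mathcal{T}_{2e}$ is verbatim the same, so I would write out the three topology axioms for $\mathcal{T}_{1e}$ and then remark that the identical argument settles $\mathcal{T}_{2e}$.

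The single observation driving every step is that the coordinate map $(F,E)\mapsto F(e)$ sends soft-set operations to the corresponding ordinary set operations, because soft union and soft intersection are defined parameter-wise. Concretely, I would proceed in three steps. First, since $\Phi,\widetilde{X}\in\mathcal{T}_{1}$ with $\Phi(e)=\emptyset$ and $\widetilde{X}(e)=X$, I immediately obtain $\emptyset,X\in\mathcal{T}_{1e}$. Second, given an arbitrary family $\{U_{i}\}_{i\in I}\subseteq\mathcal{T}_{1e}$, I write each $U_{i}=F_{i}(e)$ with $(F_{i},E)\in\mathcal{T}_{1}$; the soft union $(H,E)=\bigcup_{i\in I}(F_{i},E)$ lies in $\mathcal{T}_{1}$ and satisfies $H(e)=\bigcup_{i\in I}F_{i}(e)=\bigcup_{i\in I}U_{i}$, whence $\bigcup_{i\in I}U_{i}\in\mathcal{T}_{1e}$. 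Third, for $U=F(e)$ and $V=G(e)$ in $\mathcal{T}_{1e}$ with $(F,E),(G,E)\in\mathcal{T}_{1}$, the soft intersection $(H,E)=(F,E)\cap(G,E)$ lies in $\mathcal{T}_{1}$ and satisfies $H(e)=F(e)\cap G(e)=U\cap V$, so $U\cap V\in\mathcal{T}_{1e}$.

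I do not expect a genuine obstacle here; the argument is essentially a bookkeeping check that coordinate projection respects the operations involved. The only point requiring mild attention is that soft union is guaranteed over an arbitrary index set while soft intersection is only postulated for two soft sets — but this matches exactly the arbitrary-union and finite-intersection requirements in the definition of a topology, so the correspondence is tight and nothing is lost under the projection to the parameter $e$.
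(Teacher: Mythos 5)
Your proof is correct and follows the same route as the paper, which simply asserts that $\mathcal{T}_{1e}$ and $\mathcal{T}_{2e}$ are topologies on $X$ for each $e\in E$ (a fact already established for soft topological spaces in the cited reference) and concludes immediately. You have merely written out the parameter-wise verification of the three topology axioms that the paper leaves implicit, and your observation that arbitrary soft unions and pairwise soft intersections project exactly onto the corresponding requirements for a topology is the right justification.
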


\begin{proof}
Follows from the fact that $\mathcal{T}_{1e}$, and $\mathcal{T}_{2e}$ are
topologies on $X$ for each $e\in E$.
\end{proof}

Proposition \ref{Propref}\ shows that corresponding to each parameter $e\in
E $, we have a bitopological space $X$. Thus a bi-soft topology on $X$ gives
a parameterized family of bitopological spaces.

\begin{example}
Let $X=\{h_{1},h_{2},h_{3}\}$, $E=\{e_{1},e_{2}\}$ and%
\begin{eqnarray*}
\mathcal{T}_{1} &=&\{\Phi ,\widetilde{X}%
,(F_{1},E),(F_{2},E),(F_{3},E),(F_{4},E),(F_{5},E)\}\text{, and} \\
\mathcal{T}_{2} &=&\{\Phi ,\widetilde{X}%
,(G_{1},E),(G_{2},E),(G_{3},E),(G_{4},E)\}\text{,}
\end{eqnarray*}%
where $%
(F_{1},E),(F_{2},E),(F_{3},E),(F_{4},E),(F_{5},E),(G_{1},E),(G_{2},E),(G_{3},E), 
$ and $(G_{4},E)$ are soft sets over $X$, defined as follows:%
\begin{equation*}
\begin{array}{ll}
F_{1}(e_{1})=\{h_{2}\}, & F_{1}(e_{2})=\{h_{1}\}, \\ 
F_{2}(e_{1})=\{h_{2},h_{3}\}, & F_{2}(e_{2})=\{h_{1},h_{2}\}, \\ 
F_{3}(e_{1})=\{h_{1},h_{2}\}, & F_{3}(e_{2})=X, \\ 
F_{4}(e_{1})=\{h_{1},h_{2}\}, & F_{4}(e_{2})=\{h_{1},h_{3}\}, \\ 
F_{5}(e_{1})=\{h_{2}\}, & F_{5}(e_{2})=\{h_{1},h_{2}\},%
\end{array}%
\end{equation*}%
and%
\begin{equation*}
\begin{array}{ll}
G_{1}(e_{1})=\{h_{1}\}, & G_{1}(e_{2})=\{h_{2}\}, \\ 
G_{2}(e_{1})=\{h_{1},h_{2}\}, & G_{2}(e_{2})=\{h_{2}\}, \\ 
G_{3}(e_{1})=\{h_{2}\}, & G_{3}(e_{2})=\{h_{2}\}, \\ 
G_{4}(e_{1})=\{\}, & G_{4}(e_{2})=\{h_{2}\}.%
\end{array}%
\end{equation*}%
Then $\mathcal{T}_{1}$\ and $\mathcal{T}_{2}$ are soft topologies on $X$.
Therefore $(X,\mathcal{T}_{1},\mathcal{T}_{2},E)$ is a \textit{bi-soft
topological space over }$X$. It can be easily seen that%
\begin{eqnarray*}
\mathcal{T}_{1e_{1}} &=&\{\emptyset
,X,\{h_{2}\},\{h_{1},h_{2}\},\{h_{2},h_{3}\}\}, \\
\mathcal{T}_{2e_{1}} &=&\{\emptyset ,X,\{h_{1}\},\{h_{2}\},\{h_{1},h_{2}\}\},
\end{eqnarray*}%
and%
\begin{eqnarray*}
\mathcal{T}_{1e_{2}} &=&\{\emptyset
,X,\{h_{1}\},\{h_{1},h_{3}\},\{h_{1},h_{2}\}\}, \\
\mathcal{T}_{2e_{2}} &=&\{\emptyset ,X,\{h_{2}\}\},
\end{eqnarray*}%
are topologies on $X$. Thus $(X,\mathcal{T}_{1e_{1}},\mathcal{T}_{2e_{1}})$
and $(X,\mathcal{T}_{1e_{2}},\mathcal{T}_{2e_{2}})$ are bitopological spaces
corresponding to parameters.
\end{example}

We have seen in \cite{Shabir} that the intersection of two soft topologies
is again a soft topology on $X$ but the union of two soft topologies need
not be a soft topology and its examples can be found in \cite{Shabir}. Now
we define the supremum soft topology:

\begin{definition}
Let $(X,\mathcal{T}_{1},E)$ and $(X,\mathcal{T}_{2},E)$\ be two soft
topological spaces over $X.$ Let $\mathcal{T}_{1}\vee \mathcal{T}_{2}$ be
the smallest soft topology on $X$ that contains $\mathcal{T}_{1}\cup 
\mathcal{T}_{2}$.
\end{definition}

\begin{example}
Let $X=\{h_{1},h_{2},h_{3}\}$, $E=\{e_{1},e_{2}\}$. Let%
\begin{eqnarray*}
\mathcal{T}_{1} &=&\{\Phi ,\widetilde{X},(F_{1},E),(F_{2},E)\}\text{, and} \\
\mathcal{T}_{2} &=&\{\Phi ,\widetilde{X}%
,(G_{1},E),(G_{2},E),(G_{3},E),(G_{4},E)\}\text{,}
\end{eqnarray*}%
where $(F_{1},E),(F_{2},E),(G_{1},E),(G_{2},E),(G_{3},E),(G_{4},E)$ are soft
sets over $X$, defined as follows:%
\begin{equation*}
\begin{array}{lll}
F_{1}(e_{1})=\{h_{1}\}, &  & F_{1}(e_{2})=\{h_{1},h_{2}\}, \\ 
F_{2}(e_{1})=\{h_{1},h_{3}\}, &  & F_{2}(e_{2})=X,%
\end{array}%
\end{equation*}%
and 
\begin{equation*}
\begin{array}{lll}
G_{1}(e_{1})=\{h_{1}\}, &  & G_{1}(e_{2})=\{h_{2}\}, \\ 
G_{2}(e_{1})=\{h_{1},h_{2}\}, &  & G_{2}(e_{2})=\{h_{2}\}, \\ 
G_{3}(e_{1})=\{h_{2}\}, &  & G_{3}(e_{2})=\{h_{2}\}, \\ 
G_{4}(e_{1})=\{\}, &  & G_{4}(e_{2})=\{h_{2}\}.%
\end{array}%
\end{equation*}%
Then $\mathcal{T}_{1}$\ and $\mathcal{T}_{2}$ are soft topologies on $X$. Now%
\begin{equation*}
\mathcal{T}_{1}\vee \mathcal{T}_{2}=\{\Phi ,\widetilde{X}%
,(F_{1},E),(F_{2},E),(G_{1},E),(G_{2},E),(G_{3},E),(G_{4},E),(H_{1},E)\}
\end{equation*}%
where%
\begin{equation*}
\begin{array}{lll}
H_{1}(e_{1})=\{h_{1},h_{2}\}, &  & H_{1}(e_{2})=\{h_{1},h_{2}\},%
\end{array}%
\end{equation*}%
Thus $(X,\mathcal{T}_{1}\vee \mathcal{T}_{2},E)$ is the smallest \textit{%
soft topological space over }$X$ that contains $\mathcal{T}_{1}\cup \mathcal{%
T}_{2}$.
\end{example}

\section{Bi-Soft Separation Axioms\label{Sep}}

In the last section concept of bi-soft topological spaces has been
introduced. In this section separation axioms for bi-soft topological spaces
are being studied.

\begin{definition}
A bi-soft topological space $(X,\mathcal{T}_{1},\mathcal{T}_{2},E)$ over $X$
is said to be \textit{pair-wise soft }$T_{0}-$\textit{space} if for every
pair of distinct points $x$,$y\in X$, there is a $\mathcal{T}_{1}-$soft open
set $(F,E)$ such that $x\in (F,E)$ and $y\notin (F,E)$ or a $\mathcal{T}%
_{2}- $soft open set $(G,E)$ such that $x\notin (G,E)$ and $y\in (G,E)$.
\end{definition}

\begin{example}
\label{Examp2}Let $X$ be any non-empty set and $E$ be a set of parameters.
Consider%
\begin{eqnarray*}
\mathcal{T}_{1} &=&\{\Phi ,\widetilde{X}\}\text{ \ Soft indiscrete topology
over }X \\
\mathcal{T}_{2} &=&\{(F,E)|(F,E)\text{ is a soft set over }X\}\text{ \ \
Soft discrete topology over }X
\end{eqnarray*}%
Then $(X,\mathcal{T}_{1},\mathcal{T}_{2},E)$\ is a pair-wise soft $T_{0}-$%
space.
\end{example}

\begin{proposition}
\label{Prop2}Let $(X,\mathcal{T}_{1},\mathcal{T}_{2},E)$ be a bi-soft
topological space over $X$. If $(X,\mathcal{T}_{1},E)$\ or $(X,\mathcal{T}%
_{2},E)$\ is a soft $T_{0}-$space then $(X,\mathcal{T}_{1},\mathcal{T}%
_{2},E) $\ is a pair-wise soft $T_{0}-$space.

\begin{proof}
Let $x$,$y\in X$ be such that $x\neq y$. Suppose that $(X,\mathcal{T}_{2},E)$%
\ is a soft $T_{0}-$space. Then there exist some $(F,E)\in \mathcal{T}_{1}$
such that $x\in (F,E)$ and$\ y\notin (F,E)$ or some $(G,E)\in \mathcal{T}%
_{2} $ such that $y\in (G,E)$ and $x\notin (G,E)$. In either case we obtain
the requirement and so $(X,\mathcal{T}_{1},\mathcal{T}_{2},E)$\ is a
pair-wise soft $T_{0}-$space.
\end{proof}
\end{proposition}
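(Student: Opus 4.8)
The plan is to treat the two hypotheses symmetrically and reduce the statement to an unpacking of the two relevant definitions. First I would fix a pair of distinct points $x,y\in X$ and, without loss of generality, assume that $(X,\mathcal{T}_{2},E)$ is the soft $T_{0}$-space; the case in which $(X,\mathcal{T}_{1},E)$ is soft $T_{0}$ is entirely parallel. Applying the definition of a soft $T_{0}$-space to the pair $x,y$ then furnishes a $\mathcal{T}_{2}$-soft open set $(H,E)$ containing exactly one of the two points.

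The only step that needs care is aligning the one-sided separation produced by soft $T_{0}$ with the correspondingly one-sided $\mathcal{T}_{2}$-disjunct of the pair-wise soft $T_{0}$ definition. Soft $T_{0}$ merely guarantees that $(H,E)$ realizes ``$x\in (H,E)$ and $y\notin (H,E)$'' or ``$y\in (H,E)$ and $x\notin (H,E)$'', whereas the pair-wise definition prescribes the fixed orientation $x\notin (G,E)$, $y\in (G,E)$ for its $\mathcal{T}_{2}$-witness. To reconcile these I would label the point lying inside $(H,E)$ as $y$ and the other as $x$, which is legitimate because the pair-wise condition is demanded of the unordered pair $\{x,y\}$ and distinctness is a symmetric relation. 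With this labeling, taking $(G,E)=(H,E)$ verifies the $\mathcal{T}_{2}$-disjunct directly.

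If instead $(X,\mathcal{T}_{1},E)$ is the soft $T_{0}$-space, the identical argument yields a $\mathcal{T}_{1}$-soft open set containing exactly one of $x,y$; labeling the contained point as $x$ then matches the first disjunct, namely a set $(F,E)\in \mathcal{T}_{1}$ with $x\in (F,E)$ and $y\notin (F,E)$. I do not anticipate any genuine obstacle, since the result is essentially definitional. The single point worth stating carefully is the orientation bookkeeping above, which is exactly where the asymmetric form of the pair-wise soft $T_{0}$ definition interacts with the symmetric separating condition supplied by soft $T_{0}$.
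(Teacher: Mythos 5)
Your proof is correct and follows essentially the same route as the paper's: both arguments simply unpack the soft $T_{0}$ hypothesis for one of the two soft topologies and feed the resulting witness into the appropriate disjunct of the pair-wise soft $T_{0}$ definition. You are in fact more careful than the paper on the one delicate point, the relabeling needed to match the fixed orientation of the $\mathcal{T}_{2}$-disjunct; the paper's proof glosses over this (and even misattributes one witness to $\mathcal{T}_{1}$ while assuming only $(X,\mathcal{T}_{2},E)$ is soft $T_{0}$), and your explicit appeal to the unordered-pair reading of the definition is consistent with how the paper's own examples apply it.
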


\begin{remark}
The converse of Proposition \ref{Prop2} is not true in general.
\end{remark}

\begin{example}
Let $X=\{h_{1},h_{2},h_{3},h_{4}\}$, $E=\{e_{1},e_{2}\}$ and%
\begin{eqnarray*}
\mathcal{T}_{1} &=&\{\Phi ,\widetilde{X},(F,E)\}\text{, and} \\
\mathcal{T}_{2} &=&\{\Phi ,\widetilde{X},(G_{1},E),(G_{2},E),(G_{3},E)\}%
\text{,}
\end{eqnarray*}%
where $(F,E),(G_{1},E),(G_{2},E),$ and $(G_{3},E)$ are soft sets over $X$,
defined as follows:%
\begin{equation*}
\begin{array}{lll}
F(e_{1})=\{h_{1},h_{3}\}, &  & F(e_{2})=\{h_{3}\},%
\end{array}%
\end{equation*}%
and%
\begin{equation*}
\begin{array}{ll}
G_{1}(e_{1})=\{h_{3},h_{4}\}, & G_{1}(e_{2})=\{h_{1},h_{4}\}, \\ 
G_{2}(e_{1})=\{h_{2}\}, & G_{2}(e_{2})=\{h_{2}\}, \\ 
G_{3}(e_{1})=\{h_{2},h_{3},h_{4}\}, & G_{3}(e_{2})=\{h_{1},h_{2},h_{4}\}%
\text{.}%
\end{array}%
\end{equation*}%
Then $\mathcal{T}_{1}$\ and $\mathcal{T}_{2}$ are soft topologies on $X$.
Therefore $(X,\mathcal{T}_{1},\mathcal{T}_{2},E)$ is a \textit{bi-soft
topological space over }$X$.

Now $h_{1},h_{2}\in X$ and $(G_{2},E)\in \mathcal{T}_{2}$ such that 
\begin{equation*}
h_{2}\in (G_{2},E)\text{, }h_{1}\notin (G_{2},E)\text{.}
\end{equation*}%
$h_{1},h_{3}\in X$ and $(F,E)\in \mathcal{T}_{1}$ such that 
\begin{equation*}
h_{3}\in (F,E)\text{, }h_{1}\notin (F,E)\text{.}
\end{equation*}%
$h_{1},h_{4}\in X$ and $(G_{1},E)\in \mathcal{T}_{2}$ such that 
\begin{equation*}
h_{4}\in (G_{1},E)\text{, }h_{1}\notin (G_{1},E)\text{.}
\end{equation*}%
$h_{2},h_{3}\in X$ and $(G_{2},E)\in \mathcal{T}_{2}$ such that 
\begin{equation*}
h_{2}\in (G_{2},E)\text{, }h_{3}\notin (G_{2},E)\text{.}
\end{equation*}%
$h_{2},h_{4}\in X$ and $(G_{2},E)\in \mathcal{T}_{2}$ such that 
\begin{equation*}
h_{2}\in (G_{2},E)\text{, }h_{4}\notin (G_{2},E)\text{.}
\end{equation*}%
Finally $h_{3},h_{4}\in X$ and $(G_{3},E)\in \mathcal{T}_{2}$ such that 
\begin{equation*}
h_{4}\in (G_{3},E)\text{, }h_{3}\notin (G_{3},E)\text{.}
\end{equation*}%
Thus $(X,\mathcal{T}_{1},\mathcal{T}_{2},E)$ is a pair-wise \textit{soft }$%
T_{0}-$\textit{\ space over }$X$.

We observe that $h_{1},h_{2}\in X$ and there does not exist any $(F,E)\in 
\mathcal{T}_{1}$ such that $h_{1}\in (F,E)$, $h_{2}\notin (F,E)$ or $%
h_{2}\in (F,E)$, $h_{1}\notin (F,E)$. Therefore $(X,\mathcal{T}_{1},E)$ is
not a \textit{soft }$T_{0}-$\textit{\ space over }$X$. Similarly $%
h_{1},h_{3}\in X$ and there does not exist any $(G,E)\in \mathcal{T}_{2}$
such that $h_{1}\in (G,E)$, $h_{3}\notin (G,E)$ or $h_{3}\in (G,E)$, $%
h_{1}\notin (G,E)$ so $(X,\mathcal{T}_{2},E)$ is not a \textit{soft }$T_{0}-$%
\textit{\ space also}.
\end{example}

\begin{proposition}
\label{Prop1}Let $(X,\mathcal{T}_{1},\mathcal{T}_{2},E)$ be a bi-soft
topological space over $X$. If $(X,\mathcal{T}_{1},\mathcal{T}_{2},E)$\ is a
pair-wise soft $T_{0}-$space then $(X,\mathcal{T}_{1}\vee \mathcal{T}_{2},E)$%
\ is a soft $T_{0}-$space.

\begin{proof}
Let $x$,$y\in X$ be such that $x\neq y$. Then there exists some $(F,E)\in 
\mathcal{T}_{1}$ such that $x\in (F,E)$ and$\ y\notin (F,E)$ or some $%
(G,E)\in \mathcal{T}_{2}$ such that $y\in (G,E)$ and $x\notin (G,E)$. In
either case $(F,E),$ $(G,E)\in \mathcal{T}_{1}\vee \mathcal{T}_{2}$.\ Hence $%
(X,\mathcal{T}_{1}\vee \mathcal{T}_{2},E)$\ is a soft $T_{0}-$space.
\end{proof}
\end{proposition}

\begin{remark}
The converse of Proposition \ref{Prop1}, is not true. This is shown by the
following example:
\end{remark}

\begin{example}
Let $X=\{h_{1},h_{2},h_{3},h_{4}\}$, $E=\{e_{1},e_{2}\}$ and%
\begin{eqnarray*}
\mathcal{T}_{1} &=&\{\Phi ,\widetilde{X},(F_{1},E),(F_{2},E)\}\text{, and} \\
\mathcal{T}_{2} &=&\{\Phi ,\widetilde{X},(G,E)\}\text{,}
\end{eqnarray*}%
where $(F_{1},E),(F_{2},E),$ and $(G,E)$ are soft sets over $X$, defined as
follows:%
\begin{equation*}
\begin{array}{ll}
F_{1}(e_{1})=\{h_{1},h_{4}\}, & F_{1}(e_{2})=\{h_{4}\}, \\ 
F_{2}(e_{1})=\{h_{4}\}, & F_{2}(e_{2})=\{\},%
\end{array}%
\end{equation*}%
and%
\begin{equation*}
\begin{array}{lll}
G(e_{1})=\{h_{2},h_{4}\}, &  & G(e_{2})=\{h_{1},h_{2}\}.%
\end{array}%
\end{equation*}%
Then $\mathcal{T}_{1}$\ and $\mathcal{T}_{2}$ are soft topologies on $X$.
Therefore $(X,\mathcal{T}_{1},\mathcal{T}_{2},E)$ is a \textit{bi-soft
topological space over }$X$. Now%
\begin{equation*}
\mathcal{T}_{1}\vee \mathcal{T}_{2}=\{\Phi ,\widetilde{X}%
,(F_{1},E),(F_{2},E),(G,E),(H,E)\}
\end{equation*}%
where%
\begin{equation*}
\begin{array}{lll}
H(e_{1})=\{h_{1},h_{2},h_{4}\}, &  & H(e_{2})=\{h_{1},h_{2},h_{4}\},%
\end{array}%
\end{equation*}%
so $(X,\mathcal{T}_{1}\vee \mathcal{T}_{2},E)$ is a \textit{soft topological
space over }$X$ that contains $\mathcal{T}_{1}\cup \mathcal{T}_{2}$.

For $h_{1},h_{3}\in X$, we cannot find any soft sets $(F,E)\in \mathcal{T}%
_{1}$ or $(G,E)\in \mathcal{T}_{2}$ such that 
\begin{eqnarray*}
h_{1} &\in &(F,E)\text{, }h_{3}\notin (F,E)\text{ or} \\
h_{3} &\in &(G,E)\text{, }h_{1}\notin (G,E)\text{.}
\end{eqnarray*}%
Thus $(X,\mathcal{T}_{1},\mathcal{T}_{2},E)$ is not pair-wise \textit{soft} $%
T_{0}-$space.

Now $h_{1},h_{2}\in X$ and $(G,E)\in \mathcal{T}_{2}$ such that 
\begin{equation*}
h_{2}\in (G,E)\text{, }h_{1}\notin (G,E)\text{.}
\end{equation*}%
$h_{1},h_{3}\in X$ and $(H,E)\in \mathcal{T}_{1}\vee \mathcal{T}_{2}$ such
that 
\begin{equation*}
h_{1}\in (H,E)\text{, }h_{3}\notin (H,E)\text{.}
\end{equation*}%
$h_{1},h_{4}\in X$ and $(F_{1},E)\in \mathcal{T}_{1}$ such that 
\begin{equation*}
h_{4}\in (F_{1},E)\text{, }h_{1}\notin (F_{1},E)\text{.}
\end{equation*}%
$h_{2},h_{3}\in X$ and $(G,E)\in \mathcal{T}_{2}$ such that 
\begin{equation*}
h_{2}\in (G,E)\text{, }h_{3}\notin (G,E)\text{.}
\end{equation*}%
$h_{2},h_{4}\in X$ and $(F_{1},E)\in \mathcal{T}_{1}$ such that 
\begin{equation*}
h_{4}\in (F_{1},E)\text{, }h_{2}\notin (F_{1},E)\text{.}
\end{equation*}%
Finally $h_{3},h_{4}\in X$ and $(F_{1},E)\in \mathcal{T}_{1}$ such that 
\begin{equation*}
h_{4}\in (F_{1},E)\text{, }h_{3}\notin (F_{1},E)\text{.}
\end{equation*}%
Thus $(X,\mathcal{T}_{1}\vee \mathcal{T}_{2},E)$ is a \textit{soft }$T_{0}-$%
\textit{\ space over }$X$.
\end{example}

\begin{example}
\label{Examp1}Let $X=\{h_{1},h_{2},h_{3},h_{4}\}$, $E=\{e_{1},e_{2}\}$ and%
\begin{eqnarray*}
\mathcal{T}_{1} &=&\{\Phi ,\widetilde{X},(F,E)\}\text{, and} \\
\mathcal{T}_{2} &=&\{\Phi ,\widetilde{X},(G_{1},E),(G_{2},E),(G_{3},E)\}%
\text{,}
\end{eqnarray*}%
where $(F,E),(G_{1},E),(G_{2},E),$ and $(G_{3},E)$ are soft sets over $X$,
defined as follows:%
\begin{equation*}
\begin{array}{lll}
F(e_{1})=\{h_{1},h_{3}\}, &  & F(e_{2})=\{h_{3}\},%
\end{array}%
\end{equation*}%
and%
\begin{equation*}
\begin{array}{lll}
G_{1}(e_{1})=\{h_{3},h_{4}\}, &  & G_{1}(e_{2})=\{h_{1},h_{4}\}, \\ 
G_{2}(e_{1})=\{h_{2}\}, &  & G_{2}(e_{2})=\{h_{2}\}, \\ 
G_{3}(e_{1})=\{h_{2},h_{3},h_{4}\}, &  & G_{3}(e_{2})=\{h_{1},h_{2},h_{4}\}%
\text{.}%
\end{array}%
\end{equation*}%
Then $\mathcal{T}_{1}$\ and $\mathcal{T}_{2}$ are soft topologies on $X$.
Therefore $(X,\mathcal{T}_{1},\mathcal{T}_{2},E)$ is a \textit{bi-soft
topological space over }$X$. Also observe that $(X,\mathcal{T}_{1},\mathcal{T%
}_{2},E)$ is a pair-wise soft $T_{0}-$space. Now%
\begin{eqnarray*}
\mathcal{T}_{1e_{1}} &=&\{\emptyset ,X,\{h_{1},h_{3}\}\}, \\
\mathcal{T}_{2e_{1}} &=&\{\emptyset ,X,\{h_{3}\}\},
\end{eqnarray*}%
and%
\begin{eqnarray*}
\mathcal{T}_{2e_{2}} &=&\{\emptyset
,X,\{h_{2}\},\{h_{3},h_{4}\},\{h_{2},h_{3},h_{4}\}\}, \\
\mathcal{T}_{2e_{2}} &=&\{\emptyset
,X,\{h_{2}\},\{h_{1},h_{4}\},\{h_{1},h_{2},h_{4}\}\},
\end{eqnarray*}%
are corresponding parametrized topologies on $X$. Considering the \textit{%
bitopological space} $(X,\mathcal{T}_{1e_{1}},\mathcal{T}_{2e_{1}})$, one
can easily see that $h_{2},h_{4}\in X$ and there do not exist any $\mathcal{T%
}_{1e_{1}}-$open set $X$ such that $h_{2}\in X$, $h_{4}\notin X$ or $%
\mathcal{T}_{2e_{1}}-$open set $V$ such that $h_{4}\in V$, $h_{2}\notin V$.
Thus $(X,\mathcal{T}_{1e_{1}},\mathcal{T}_{2e_{1}})$ is not a pair-wise $%
T_{0}-$space.
\end{example}

Example \ref{Examp1}, shows that the parametrized bitopological spaces need
not be pair-wise $T_{0}$ even if the given bi-soft topological space is
pair-wise soft $T_{0}-$space. Following proposition will provide us an
alternative condition that resolves this problem while looking for the
corresponding parameterized families.

\begin{proposition}
Let $(X,\mathcal{T}_{1},\mathcal{T}_{2},E)$ be a bi-soft topological space
over $X$ and $x$,$y\in X$ be such that $x\neq y$. If there exists a $%
\mathcal{T}_{1}-$soft open set $(F,E)$ such that $x\in (F$,$E)$ and$\ y\in
(F $,$E)^{c}$ or a $\mathcal{T}_{2}-$soft open set $(G,E)$ such that $y\in
(G $,$E)$ and $x\in (G$,$E)^{c}$, then $(X,\mathcal{T}_{1},\mathcal{T}%
_{2},E) $ is a pair-wise soft $T_{0}-$space over $X$ and $(X,\mathcal{T}%
_{1e},\mathcal{T}_{2e})$\ is a pair-wise $T_{0}-$space for each $e\in E$.
\end{proposition}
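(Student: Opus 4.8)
The plan is to unwind the defining conditions ``$x\in(F,E)$'', ``$y\in(F,E)^{c}$'' and ``$y\notin(F,E)$'' into their parameter-wise meaning, and then to handle the two halves of the conclusion by the same case split on which alternative of the hypothesis occurs.

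First I would record the two facts that drive everything. By the definition of membership in a soft set, $x\in(F,E)$ means $x\in F(\alpha)$ for \emph{every} $\alpha\in E$, whereas $y\in(F,E)^{c}=(F^{c},E)$ means $y\in X-F(\alpha)$, that is $y\notin F(\alpha)$, again for \emph{every} $\alpha\in E$. The point to stress is that this is strictly stronger than ``$y\notin(F,E)$'', which only requires $y\notin F(\alpha)$ for \emph{some} $\alpha$. This strengthening is exactly the ingredient that Example \ref{Examp1} shows is absent from the plain pair-wise soft $T_{0}$ condition.

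For the first half (pair-wise soft $T_{0}$) I would argue by cases on the hypothesis. If there is a $\mathcal{T}_{1}$-soft open set $(F,E)$ with $x\in(F,E)$ and $y\in(F,E)^{c}$, then since $E\neq\emptyset$ there is at least one $\alpha$ with $y\notin F(\alpha)$, hence $y\notin(F,E)$; together with $x\in(F,E)$ this is precisely the first alternative in the definition of a pair-wise soft $T_{0}$-space. Symmetrically, a $\mathcal{T}_{2}$-soft open set $(G,E)$ with $y\in(G,E)$ and $x\in(G,E)^{c}$ yields $x\notin(G,E)$, giving the second alternative. So the pair $x,y$ is separated in the required sense, and running this for every distinct pair shows $(X,\mathcal{T}_{1},\mathcal{T}_{2},E)$ is pair-wise soft $T_{0}$.

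For the second half I would exploit the universal quantifier directly. Fixing $e\in E$, in the first case $x\in(F,E)$ gives $x\in F(e)$ while $y\in(F,E)^{c}$ gives $y\notin F(e)$; since $F(e)\in\mathcal{T}_{1e}$ by the construction of the parameterized topology in Proposition \ref{Propref}, the set $F(e)$ is a $\mathcal{T}_{1e}$-open set containing $x$ but not $y$. In the second case $G(e)\in\mathcal{T}_{2e}$ contains $y$ but not $x$. Either way the pair is separated in the bitopological sense, so $(X,\mathcal{T}_{1e},\mathcal{T}_{2e})$ is pair-wise $T_{0}$ for each $e$. The step I would watch most carefully is exactly this restriction to a single parameter $e$: it works only because the hypothesis delivers $y\notin F(\alpha)$ for \emph{all} $\alpha$, and in particular for $\alpha=e$, which is precisely where the complement formulation rather than mere non-membership is indispensable.
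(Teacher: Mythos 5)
Your proof is correct and follows essentially the same route as the paper's: unwind $y\in(F,E)^{c}$ into $y\notin F(\alpha)$ for every $\alpha\in E$, deduce $y\notin(F,E)$ for the pair-wise soft $T_{0}$ claim, and then restrict to a fixed parameter $e$ to separate $x$ and $y$ in $(X,\mathcal{T}_{1e},\mathcal{T}_{2e})$. Your explicit remark that the non-emptiness of $E$ is what licenses passing from the universal to the existential non-membership is a small point of care the paper leaves implicit, but the argument is the same.
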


\begin{proof}
Let $x$,$y\in X$ be such that $x\neq y$ and $(F,E)\in \mathcal{T}_{1}$ such
that $x\in (F,E)$ and$\ y\in (F,E)^{c}$ Or $(G,E)\in \mathcal{T}_{2}$ such
that $y\in (G,E)$ and $x\in (G,E)^{c}$. If $y\in (F,E)^{c}$\ then $y\in
(F(e))^{c}$\ for each $e\in E$. This implies that $y\notin F(e)$\ for each $%
e\in E$. Therefore $y\notin (F,E)$.\ Similarly we can show that if $x\in
(G,E)^{c}$ then $x\notin (G,E)$.\ Hence $(X,\mathcal{T}_{1},\mathcal{T}%
_{2},E)$\ is a pair-wise soft $T_{0}-$space.\ Now for any $e\in E$,\ $(X,%
\mathcal{T}_{1e},\mathcal{T}_{2e})$\ is a \textit{bitopological space}. By
above discussion we have $x\in F(e)\in \mathcal{T}_{1e}$ and\ $y\notin F(e)$
or\ $y\in G(e)\in \mathcal{T}_{2e}$ and\ $x\notin G(e)$. Thus $(X,\mathcal{T}%
_{1e},\mathcal{T}_{2e})$ is a pair-wise $T_{0}-$space.
\end{proof}

\begin{proposition}
Let $(X,\mathcal{T}_{1},\mathcal{T}_{2},E)$ be a bi-soft topological space
over $X$ and $Y$ be a non-empty subset of $X$. If $(X,\mathcal{T}_{1},%
\mathcal{T}_{2},E)$\ is a pair-wise soft $T_{0}-$space then $(Y,\mathcal{T}%
_{1Y},\mathcal{T}_{2Y},E)$\ is also a pair-wise soft $T_{0}-$space.

\begin{proof}
Let $x$,$y\in Y$ be such that $x\neq y$. Then there exists some soft set $%
(F,E)\in \mathcal{T}_{1}$ or $(G,E)\in \mathcal{T}_{2}$ such that $x\in
(F,E) $ and$\ y\notin (F,E)$ or $y\in (G,E)$ and $x\notin (G,E)$. Suppose
that there exists some soft set $(F,E)\in \mathcal{T}_{1}$ such that $x\in
(F,E)$ and$\ y\notin (F,E)$. Now $x\in Y$\ implies that $x\in \widetilde{Y}$%
.\ So $x\in \widetilde{Y}$ and $x\in (F,E)$. Hence $x\in \widetilde{Y}\cap
(F,E)=(^{Y}F,E)$. Consider $y\notin (F,E)$, this means that $y\notin F(e)$
for some $e\in E$.\ Then $y\notin Y\cap F(e)=Y(e)\cap F(e)$.\ Therefore $%
y\notin \widetilde{Y}\cap (F,E)=(^{Y}F,E)$. Similarly it can be proved that
if $y\in (G,E)$ and $x\notin (G,E)$ then $y\in (^{Y}G,E)$ and $x\notin
(^{Y}G,E)$.\ Thus $(Y,\mathcal{T}_{1Y},\mathcal{T}_{2Y},E)$\ is also a
pair-wise soft $T_{0}-$space.
\end{proof}
\end{proposition}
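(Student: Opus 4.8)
The plan is to take the soft open set that separates the two points in the ambient space and push it into the subspace by intersecting with $\widetilde{Y}$, the subtlety being that the soft membership relation hides a universal quantifier on one side and an existential one on the other. First I would fix distinct points $x,y\in Y$. Since $Y\subseteq X$, these are distinct points of $X$ as well, so the pair-wise soft $T_{0}$ hypothesis on $(X,\mathcal{T}_{1},\mathcal{T}_{2},E)$ furnishes either a set $(F,E)\in\mathcal{T}_{1}$ with $x\in(F,E)$ and $y\notin(F,E)$, or a set $(G,E)\in\mathcal{T}_{2}$ with $y\in(G,E)$ and $x\notin(G,E)$. These two alternatives are symmetric under interchanging the roles of $\mathcal{T}_{1}$ with $\mathcal{T}_{2}$ and $x$ with $y$, so I would treat the first in full and simply record that the second follows by the same argument.

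For the first alternative the obvious candidate in the subspace is the relative soft open set $(^{Y}F,E)=\widetilde{Y}\cap(F,E)$, which lies in $\mathcal{T}_{1Y}$ by the definition of the soft relative topology. To show $x\in(^{Y}F,E)$ I would use that $x\in Y$ gives $x\in Y(\alpha)=Y$ for every $\alpha\in E$, so $x\in\widetilde{Y}$, and since $x\in(F,E)$ means $x\in F(\alpha)$ for every $\alpha$, we get $x\in Y\cap F(\alpha)={}^{Y}F(\alpha)$ for every $\alpha\in E$; hence $x\in(^{Y}F,E)$. To show $y\notin(^{Y}F,E)$ I would use the other convention: $y\notin(F,E)$ means there is some $\alpha_{0}\in E$ with $y\notin F(\alpha_{0})$, and because ${}^{Y}F(\alpha_{0})=Y\cap F(\alpha_{0})\subseteq F(\alpha_{0})$ it follows that $y\notin{}^{Y}F(\alpha_{0})$, so $y\notin(^{Y}F,E)$.

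The symmetric second alternative produces $(^{Y}G,E)\in\mathcal{T}_{2Y}$ with $y\in(^{Y}G,E)$ and $x\notin(^{Y}G,E)$ by the identical computation, and in either case the points $x,y$ of $Y$ have been separated in the required pair-wise fashion, so $(Y,\mathcal{T}_{1Y},\mathcal{T}_{2Y},E)$ is pair-wise soft $T_{0}$. I expect the only real care to be needed at the point where membership must be checked: showing that the containing point survives the intersection requires the condition at \emph{every} parameter, whereas showing that the excluded point is lost needs failure at just \emph{one} parameter, and passing to $Y\cap F(\alpha)$ preserves both directions precisely because intersecting with $Y$ can only shrink each $F(\alpha)$. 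Getting this quantifier bookkeeping straight is the whole content of the proof.
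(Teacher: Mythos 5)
Your proof is correct and follows essentially the same route as the paper: pull back the separating soft open set to the subspace via $\widetilde{Y}\cap(F,E)=({}^{Y}F,E)$, verify $x\in({}^{Y}F,E)$ using membership at every parameter, and verify $y\notin({}^{Y}F,E)$ from failure at a single parameter, with the second alternative handled symmetrically. The quantifier bookkeeping you highlight is exactly the content of the paper's own argument.
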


\begin{definition}
A bi-soft topological space $(X,\mathcal{T}_{1},\mathcal{T}_{2},E)$ over $X$
is said to be pair-wise soft $T_{1}-$space if for every pair of distinct
points $x$,$y\in X$, there is a $\mathcal{T}_{1}-$soft open set $(F,E)$ such
that $x\in (F,E)$ and $y\notin (F,E)$ and a $\mathcal{T}_{2}-$soft open set $%
(G,E)$ such that $x\notin (G,E)$ and $y\in (G,E)$.
\end{definition}

\begin{example}
\label{Examp3}Let $X=\{h_{1},h_{2},h_{3}\}$, $E=\{e_{1},e_{2}\}$ and%
\begin{eqnarray*}
\mathcal{T}_{1} &=&\{\Phi ,\widetilde{X}%
,(F_{1},E),(F_{2},E),(F_{3},E),(F_{4},E),(F_{5},E),(F_{6},E),(F_{7},E)\}%
\text{, and} \\
\mathcal{T}_{2} &=&\{\Phi ,\widetilde{X}%
,(G_{1},E),(G_{2},E),(G_{3},E),(G_{4},E),(G_{5},E),(G_{6},E)\}\text{,}
\end{eqnarray*}%
where $(F_{1},E),$ $(F_{2},E),$ $(F_{3},E),$ $(F_{4},E),$ $(F_{5},E),$ $%
(F_{6},E),$ $(F_{7},E),$ $(F_{8},E),$ $(G_{1},E),$ $(G_{2},E),$ $(G_{3},E),$ 
$(G_{4},E),$ $(G_{5},E)$ and $(G_{6},E)$ are soft sets over $X$, defined as
follows:%
\begin{equation*}
\begin{array}{ll}
F_{1}(e_{1})=\{h_{1}\}, & F_{1}(e_{2})=\{h_{1},h_{3}\}, \\ 
F_{2}(e_{1})=\{h_{3}\}, & F_{2}(e_{2})=\{h_{3}\}, \\ 
F_{3}(e_{1})=\{h_{1},h_{3}\}, & F_{3}(e_{2})=\{h_{1},h_{3}\}, \\ 
F_{4}(e_{1})=\{\}, & F_{4}(e_{2})=\{h_{3}\}, \\ 
F_{5}(e_{1})=\{h_{2},h_{3}\}, & F_{5}(e_{2})=\{h_{2}\}, \\ 
F_{6}(e_{1})=\{h_{3}\}, & F_{6}(e_{2})=\{\}, \\ 
F_{7}(e_{1})=\{h_{2},h_{3}\}, & F_{7}(e_{2})=\{h_{2},h_{3}\},%
\end{array}%
\end{equation*}%
and%
\begin{equation*}
\begin{array}{lll}
G_{1}(e_{1})=\{h_{2}\}, &  & G_{1}(e_{2})=\{h_{2}\}, \\ 
G_{2}(e_{1})=\{h_{3}\}, &  & G_{2}(e_{2})=\{h_{3}\}, \\ 
G_{3}(e_{1})=\{h_{2},h_{3}\}, &  & G_{3}(e_{2})=\{h_{2},h_{3}\}, \\ 
G_{4}(e_{1})=\{h_{1},h_{2}\}, &  & G_{4}(e_{2})=\{h_{1},h_{2}\}, \\ 
G_{5}(e_{1})=\{h_{1},h_{3}\}, &  & G_{5}(e_{2})=\{h_{1},h_{3}\}, \\ 
G_{6}(e_{1})=\{h_{1}\}, &  & G_{6}(e_{2})=\{h_{1}\}.%
\end{array}%
\end{equation*}%
Then $\mathcal{T}_{1}$\ and $\mathcal{T}_{2}$ are soft topologies on $X$.
Therefore $(X,\mathcal{T}_{1},\mathcal{T}_{2},E)$ is a \textit{bi-soft
topological space over }$X$. One can easily see that $(X,\mathcal{T}_{1},%
\mathcal{T}_{2},E)$ is a pair-wise \textit{soft} $T_{1}-$\textit{space over }%
$X$.
\end{example}

\begin{proposition}
\label{Prop4}Let $(X,\mathcal{T}_{1},\mathcal{T}_{2},E)$ be a bi-soft
topological space over $X$. Then $(X,\mathcal{T}_{1},\mathcal{T}_{2},E)$\ is
a pair-wise soft $T_{1}-$space if and only if $(X,\mathcal{T}_{1},E)$\ and $%
(X,\mathcal{T}_{2},E)$\ are soft $T_{1}-$spaces.

\begin{proof}
Let $x$,$y\in X$ be such that $x\neq y$. Suppose that $(X,\mathcal{T}_{1},E)$%
\ and $(X,\mathcal{T}_{2},E)$\ are soft $T_{1}-$spaces. Then there exist
some $(F,E)\in \mathcal{T}_{1}$ and $(G,E)\in \mathcal{T}_{2}$\ such that $%
x\in (F,E)$ and$\ y\notin (F,E)$ and $y\in (G,E)$ and $x\notin (G,E)$. In
either case we obtain the requirement and so $(X,\mathcal{T}_{1},\mathcal{T}%
_{2},E)$\ is a pair-wise soft $T_{1}-$space. Conversely we assume that $(X,%
\mathcal{T}_{1},\mathcal{T}_{2},E)$\ is a pair-wise soft $T_{1}-$space. Then
there exist some $(F_{1},E)\in \mathcal{T}_{1}$ and $(G_{1},E)\in \mathcal{T}%
_{2}$\ such that $x\in (F_{1},E)$ and $y\notin (F_{1},E)$ and $y\in
(G_{1},E) $ and $x\notin (G_{1},E)$. Also there exist soft sets $%
(F_{2},E)\in \mathcal{T}_{1}$ and $(G_{2},E)\in \mathcal{T}_{2}$\ such that $%
y\in (F_{2},E)$ and $x\notin (F_{2},E)$ and $x\in (G_{2},E)$ and $y\notin
(G_{2},E)$. Thus $(X,\mathcal{T}_{1},E)$\ and $(X,\mathcal{T}_{2},E)$\ are
soft $T_{1}-$spaces.
\end{proof}
\end{proposition}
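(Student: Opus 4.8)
The statement is a biconditional, so the plan is to prove the two implications separately, in each case aligning the roles that $\mathcal{T}_{1}$ and $\mathcal{T}_{2}$ play in the pair-wise definition with the roles they play in the ordinary soft $T_{1}$ definition.

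For the ``if'' direction I would assume that both $(X,\mathcal{T}_{1},E)$ and $(X,\mathcal{T}_{2},E)$ are soft $T_{1}$-spaces and fix distinct points $x$, $y\in X$. Soft $T_{1}$-ness of $(X,\mathcal{T}_{1},E)$ supplies, in particular, a set $(F,E)\in \mathcal{T}_{1}$ with $x\in (F,E)$ and $y\notin (F,E)$, while soft $T_{1}$-ness of $(X,\mathcal{T}_{2},E)$ supplies a set $(G,E)\in \mathcal{T}_{2}$ with $y\in (G,E)$ and $x\notin (G,E)$. These two sets are exactly what the definition of a pair-wise soft $T_{1}$-space demands, so this direction is immediate once the appropriate half of each soft $T_{1}$ condition is selected.

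For the ``only if'' direction I would assume $(X,\mathcal{T}_{1},\mathcal{T}_{2},E)$ is pair-wise soft $T_{1}$ and again fix distinct $x$, $y$. The key move is to invoke the pair-wise condition twice: once for the ordered pair $(x,y)$ and once for $(y,x)$. The first application yields $(F_{1},E)\in \mathcal{T}_{1}$ with $x\in (F_{1},E)$, $y\notin (F_{1},E)$, together with $(G_{1},E)\in \mathcal{T}_{2}$ satisfying $y\in (G_{1},E)$, $x\notin (G_{1},E)$; the second yields $(F_{2},E)\in \mathcal{T}_{1}$ with $y\in (F_{2},E)$, $x\notin (F_{2},E)$, together with $(G_{2},E)\in \mathcal{T}_{2}$ satisfying $x\in (G_{2},E)$, $y\notin (G_{2},E)$. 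Collecting the two $\mathcal{T}_{1}$-sets $(F_{1},E)$, $(F_{2},E)$ shows $(X,\mathcal{T}_{1},E)$ is soft $T_{1}$, and collecting the two $\mathcal{T}_{2}$-sets $(G_{1},E)$, $(G_{2},E)$ shows $(X,\mathcal{T}_{2},E)$ is soft $T_{1}$.

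No single step here is genuinely deep; the only thing requiring care is the bookkeeping. The pair-wise definition pins each separating set to a specific topology (the set containing $x$ lives in $\mathcal{T}_{1}$, the set containing $y$ lives in $\mathcal{T}_{2}$), whereas the ordinary soft $T_{1}$ condition allows both separating sets to come from a single topology. Recovering the full soft $T_{1}$ conditions therefore forces the double application in the forward direction, and I expect that symmetric use of the pair-wise hypothesis to be the only nontrivial point. Once it is exploited, both implications reduce to reading off the definitions.
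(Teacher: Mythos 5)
Your proposal is correct and follows essentially the same route as the paper: the ``if'' direction selects the appropriate half of each soft $T_{1}$ condition to produce the required $\mathcal{T}_{1}$- and $\mathcal{T}_{2}$-sets, and the ``only if'' direction applies the pair-wise hypothesis to both ordered pairs $(x,y)$ and $(y,x)$ to recover two separating sets in each topology. In fact your write-up is slightly cleaner, since you make explicit the double application that the paper's proof uses implicitly.
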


\begin{proposition}
\label{Prop3}Let $(X,\mathcal{T}_{1},\mathcal{T}_{2},E)$ be a bi-soft
topological space over $X$. If $(X,\mathcal{T}_{1},\mathcal{T}_{2},E)$\ is a
pair-wise soft $T_{1}-$space then $(X,\mathcal{T}_{1}\vee \mathcal{T}_{2},E)$%
\ is also a soft $T_{1}-$space.

\begin{proof}
Let $x$,$y\in X$ be such that $x\neq y$. There exists $(F,E)\in \mathcal{T}%
_{1}$ such that $x\in (F,E)$,$\ y\notin (F,E)$ and $(G,E)\in \mathcal{T}_{2}$
such that $y\in (G,E)$ and $x\notin (G,E)$. So $(F,E),$ $(G,E)\in \mathcal{T}%
_{1}\vee \mathcal{T}_{2}$ and thus $(X,\mathcal{T}_{1}\vee \mathcal{T}%
_{2},E) $\ is a soft $T_{1}-$space.
\end{proof}
\end{proposition}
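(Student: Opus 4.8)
The plan is to argue directly from the definitions, exactly as in the proof of Proposition~\ref{Prop1} for the $T_{0}$ case. First I would fix an arbitrary pair of distinct points $x,y\in X$. Because $(X,\mathcal{T}_{1},\mathcal{T}_{2},E)$ is assumed to be pair-wise soft $T_{1}$, the definition supplies \emph{both} a $\mathcal{T}_{1}$-soft open set $(F,E)$ with $x\in (F,E)$ and $y\notin (F,E)$, and a $\mathcal{T}_{2}$-soft open set $(G,E)$ with $y\in (G,E)$ and $x\notin (G,E)$.

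Next I would invoke the defining property of $\mathcal{T}_{1}\vee \mathcal{T}_{2}$, namely that it is the smallest soft topology containing $\mathcal{T}_{1}\cup \mathcal{T}_{2}$. This immediately yields the inclusions $\mathcal{T}_{1}\subseteq \mathcal{T}_{1}\vee \mathcal{T}_{2}$ and $\mathcal{T}_{2}\subseteq \mathcal{T}_{1}\vee \mathcal{T}_{2}$, so both $(F,E)$ and $(G,E)$ are soft open sets of $(X,\mathcal{T}_{1}\vee \mathcal{T}_{2},E)$. No new separating sets have to be manufactured: the sets already produced by the pair-wise hypothesis serve verbatim.

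Finally I would check that $(F,E)$ and $(G,E)$ witness soft $T_{1}$ separation of $x$ and $y$ in the supremum space. The two clauses ``$x\in (F,E)$ and $y\notin (F,E)$'' and ``$y\in (G,E)$ and $x\notin (G,E)$'' are precisely the conditions required in the definition of a soft $T_{1}$-space. Since $x,y$ were arbitrary distinct points, $(X,\mathcal{T}_{1}\vee \mathcal{T}_{2},E)$ is a soft $T_{1}$-space.

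I do not anticipate any genuine obstacle here; the argument is essentially a one-line containment observation phrased in separation-axiom language. The only point deserving a moment of care is that membership $x\in (F,E)$ of a point in a soft set, in the sense fixed earlier ($x\in F(\alpha)$ for all $\alpha\in E$), is unaffected by enlarging the ambient topology, since that predicate depends only on the soft set $(F,E)$ itself and not on which topology happens to contain it.
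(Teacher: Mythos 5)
Your proposal is correct and follows essentially the same route as the paper's proof: take the separating sets $(F,E)\in \mathcal{T}_{1}$ and $(G,E)\in \mathcal{T}_{2}$ furnished by the pair-wise soft $T_{1}$ hypothesis and observe that both lie in $\mathcal{T}_{1}\vee \mathcal{T}_{2}$, so they witness soft $T_{1}$ separation there. Your closing remark that membership $x\in (F,E)$ depends only on the soft set itself, not on the ambient topology, is a sensible extra check that the paper leaves implicit.
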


\begin{remark}
The converse of Proposition \ref{Prop3}, is not true. This is shown by the
following example:
\end{remark}

\begin{example}
Let $X=\{h_{1},h_{2}\}$, $E=\{e_{1},e_{2}\}$ and 
\begin{eqnarray*}
\mathcal{T}_{1} &=&\{\Phi ,\widetilde{X},(F,E)\}\text{, and} \\
\mathcal{T}_{2} &=&\{\Phi ,\widetilde{X},(G,E)\}\text{,}
\end{eqnarray*}%
where $(F,E),$ and $(G,E)$ are soft sets over $X$, defined as follows:%
\begin{equation*}
\begin{array}{lll}
F(e_{1})=\{h_{1}\}, &  & F(e_{2})=X, \\ 
G(e_{1})=X, &  & G(e_{2})=\{h_{2}\}\text{.}%
\end{array}%
\end{equation*}%
Then $\mathcal{T}_{1}$\ and $\mathcal{T}_{2}$ are soft topologies on $X$.
Therefore $(X,\mathcal{T}_{1},\mathcal{T}_{2},E)$ is a \textit{bi-soft
topological space over }$X$. Both of $(X,\mathcal{T}_{1},E)$ and $(X,%
\mathcal{T}_{2},E)$\ are not soft $T_{1}-$spaces over $X$ and so $(X,%
\mathcal{T}_{1},\mathcal{T}_{2},E)$ is not a pair-wise soft $T_{1}-$space by
Proposition \ref{Prop4}. Now%
\begin{equation*}
\mathcal{T}_{1}\vee \mathcal{T}_{2}=\{\Phi ,\widetilde{X},(F,E),(G,E),(H,E)\}
\end{equation*}%
where%
\begin{equation*}
\begin{array}{lll}
H(e_{1})=\{h_{1}\}, &  & H(e_{2})=\{h_{2}\}\text{.}%
\end{array}%
\end{equation*}%
So $(X,\mathcal{T}_{1}\vee \mathcal{T}_{2},E)$ is a \textit{soft topological
space over }$X$ containing $\mathcal{T}_{1}\cup \mathcal{T}_{2}$. For $%
h_{1},h_{2}\in X$, $(F,E)\in \mathcal{T}_{1}$, $(G,E)\in \mathcal{T}_{2}$
such that 
\begin{equation*}
h_{1}\in (F,E)\text{, }h_{2}\notin (F,E)\text{ and }h_{2}\in (G,E)\text{, }%
h_{1}\notin (G,E)\text{.}
\end{equation*}%
Thus $(X,\mathcal{T}_{1}\vee \mathcal{T}_{2},E)$ is a \textit{soft }$T_{1}-$%
\textit{space over }$X$.
\end{example}

Consider the following example:

\begin{example}
Let $X=\{h_{1},h_{2}\}$, $E=\{e_{1},e_{2}\}$ and 
\begin{eqnarray*}
\mathcal{T}_{1} &=&\{\Phi ,\widetilde{X},(F_{1},E),(F_{2},E),(F_{3},E)\}%
\text{, and} \\
\mathcal{T}_{2} &=&\{\Phi ,\widetilde{X},(G_{1},E),(G_{2},E),(G_{3},E)\}%
\text{,}
\end{eqnarray*}%
where $(F_{1},E),$ $(F_{2},E),$ $(F_{3},E),$ $(G_{1},E),$ $(G_{2},E),$ and $%
(G_{3},E)$ are soft sets over $X$, defined as follows:%
\begin{equation*}
\begin{array}{ll}
F_{1}(e_{1})=\{h_{2}\}, & F_{1}(e_{2})=X, \\ 
F_{2}(e_{1})=X, & F_{2}(e_{2})=\{h_{1}\}, \\ 
F_{3}(e_{1})=\{h_{2}\}, & F_{3}(e_{2})=\{h_{1}\},%
\end{array}%
\end{equation*}%
and%
\begin{equation*}
\begin{array}{ll}
G_{1}(e_{1})=\{h_{1}\}, & G_{1}(e_{2})=X, \\ 
G_{2}(e_{1})=X, & G_{2}(e_{2})=\{h_{2}\}, \\ 
G_{3}(e_{1})=\{h_{1}\}, & G_{3}(e_{2})=\{h_{2}\}\text{.}%
\end{array}%
\end{equation*}%
Then $\mathcal{T}_{1}$\ and $\mathcal{T}_{2}$ are soft topologies on $X$.
Therefore $(X,\mathcal{T}_{1},\mathcal{T}_{2},E)$ is a \textit{bi-soft
topological space over }$X$. Both of $(X,\mathcal{T}_{1},E)$ and $(X,%
\mathcal{T}_{2},E)$\ are soft $T_{1}-$spaces over $X$ and so $(X,\mathcal{T}%
_{1},\mathcal{T}_{2},E)$ is also a pair-wise soft $T_{1}-$space by
Proposition \ref{Prop4}. Now 
\begin{eqnarray*}
\mathcal{T}_{1e_{1}} &=&\{\emptyset ,X,\{h_{2}\}\}, \\
\mathcal{T}_{2e_{1}} &=&\{\emptyset ,X,\{h_{1}\}\},
\end{eqnarray*}%
and%
\begin{eqnarray*}
\mathcal{T}_{2e_{2}} &=&\{\emptyset ,X,\{h_{1}\}\}, \\
\mathcal{T}_{2e_{2}} &=&\{\emptyset ,X,\{h_{2}\}\},
\end{eqnarray*}%
are corresponding parametrized topologies on $X$. Considering the \textit{%
bitopological space} $(X,\mathcal{T}_{1e_{1}},\mathcal{T}_{2e_{1}})$, we see
that $h_{1},h_{2}\in X$ and there do not exist any $\mathcal{T}_{1e_{1}}-$%
open set $U$ such that $h_{1}\in U$, $h_{2}\notin U$ and $\mathcal{T}%
_{2e_{1}}-$open set $V$ such that $h_{2}\in V$, $h_{1}\notin V$. Thus $(X,%
\mathcal{T}_{1e_{1}},\mathcal{T}_{2e_{1}})$ is not a pair-wise $T_{1}-$%
space. Similarly $(X,\mathcal{T}_{1e_{2}},\mathcal{T}_{2e_{2}})$ is not a
pair-wise $T_{1}-$space too.
\end{example}

The following proposition will provide us the condition that will address
this problem when we go for the corresponding parameterized topologies.

\begin{proposition}
Let $(X,\mathcal{T}_{1},\mathcal{T}_{2},E)$ be a bi-soft topological space
over $X$ and $x$,$y\in X$ be such that $x\neq y$. If there exist a $\mathcal{%
T}_{1}-$soft open set $(F,E)$ such that $x\in (F$,$E)$, $y\in (F$,$E)^{c}$
and a $\mathcal{T}_{2}-$soft open set $(G,E)$ such that $y\in (G$,$E)$, $%
x\in (G$,$E)^{c}$, then $(X,\mathcal{T}_{1},\mathcal{T}_{2},E)$ is a
pair-wise soft $T_{1}-$space over $X$ and $(X,\mathcal{T}_{1e},\mathcal{T}%
_{2e})$\ is a pair-wise $T_{1}-$space for each $e\in E$.
\end{proposition}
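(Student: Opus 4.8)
The plan is to mirror the proof of the preceding pair-wise soft $T_{0}$ proposition, upgrading each ``or'' to an ``and'' and carrying both separating soft sets simultaneously. The single new ingredient is the translation of the complement memberships $y\in(F,E)^{c}$ and $x\in(G,E)^{c}$ into \emph{uniform} (over all parameters) non-membership statements, and this is exactly what forces the conclusion down to every parametrized bitopological space.

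First I would establish the soft part. By the definition of the soft complement, $y\in(F,E)^{c}=(F^{c},E)$ means $y\in F^{c}(\alpha)=X-F(\alpha)$ for all $\alpha\in E$; hence $y\notin F(\alpha)$ for every $\alpha$, and in particular $y\notin(F,E)$. Symmetrically, $x\in(G,E)^{c}$ yields $x\notin(G,E)$. Combining these with the hypotheses $x\in(F,E)$ and $y\in(G,E)$ gives a $\mathcal{T}_{1}$-soft open set $(F,E)$ separating $x$ from $y$ together with a $\mathcal{T}_{2}$-soft open set $(G,E)$ separating $y$ from $x$, which is precisely the definition of a pair-wise soft $T_{1}$-space.

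Next I would pass to the parametrized families. By Proposition \ref{Propref}, for each $e\in E$ the triple $(X,\mathcal{T}_{1e},\mathcal{T}_{2e})$ is a bitopological space, with $F(e)\in\mathcal{T}_{1e}$ and $G(e)\in\mathcal{T}_{2e}$. From $x\in(F,E)$ I have $x\in F(e)$ for all $e$, and from the computation above $y\notin F(e)$ for all $e$; dually $y\in G(e)$ and $x\notin G(e)$ for all $e$. Thus, for each fixed $e$, the point $x$ is caught by the $\mathcal{T}_{1e}$-open set $F(e)$ but not $y$, and $y$ is caught by the $\mathcal{T}_{2e}$-open set $G(e)$ but not $x$, which is the pair-wise $T_{1}$ condition at that parameter.

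The only subtlety --- and the reason the hypothesis is stated with complements rather than plain non-membership --- is the visibility of the separation at \emph{every} parameter. A bare assumption $y\notin(F,E)$ guarantees only $y\notin F(\alpha)$ for some $\alpha$, which (as Example \ref{Examp1} illustrates in the $T_{0}$ setting) need not survive in each $\mathcal{T}_{1e}$; the complement hypothesis upgrades this to all $\alpha$, so no parameter is lost. Once this uniformity is in hand the argument is routine, and I expect no further obstacle.
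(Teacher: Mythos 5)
Your argument is correct and is exactly the intended one: the paper dismisses this proposition as ``Straightforward,'' and your write-up is precisely the paper's own detailed proof of the analogous pair-wise soft $T_{0}$ proposition with each ``or'' upgraded to ``and'' (translate $y\in(F,E)^{c}$ into $y\notin F(\alpha)$ for \emph{all} $\alpha\in E$, conclude the soft statement, then read off the separation in each $(X,\mathcal{T}_{1e},\mathcal{T}_{2e})$). Your closing remark correctly identifies why the complement hypothesis, rather than bare non-membership, is what makes the parametrized conclusion go through.
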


\begin{proof}
Straightforward.
\end{proof}

\begin{proposition}
Let $(X,\mathcal{T}_{1},\mathcal{T}_{2},E)$ be a bi-soft topological space
over $X$ and $Y$ be a non-empty subset of $X$. If $(X,\mathcal{T}_{1},%
\mathcal{T}_{2},E)$\ is a pair-wise soft $T_{1}-$space then $(Y,\mathcal{T}%
_{1Y},\mathcal{T}_{2Y},E)$\ is also a pair-wise soft $T_{1}-$space.

\begin{proof}
Let $x$,$y\in Y$ be such that $x\neq y$. Then there exist soft sets $%
(F,E)\in \mathcal{T}_{1}$ and $(G,E)\in \mathcal{T}_{2}$ such that $x\in
(F,E)$,$\ y\notin (F,E)$ and $y\in (G,E)$, $x\notin (G,E)$. Now $x\in Y$\
implies that $x\in \widetilde{Y}$. Hence $x\in \widetilde{Y}\cap
(F,E)=(^{Y}F,E)$\ where $(F,E)\in \mathcal{T}_{1}$. Consider $y\notin (F,E)$%
, this means that $y\notin F(e)$ for some $e\in E$.\ Then $y\notin Y\cap
F(e)=Y(e)\cap F(e)$.\ Therefore $y\notin \widetilde{Y}\cap (F,E)=(^{Y}F,E)$.
Similarly it can also be proved that $y\in (G,E)$ and $x\notin (G,E)$
implies that $y\in (^{Y}G,E)$ and $x\notin (^{Y}G,E)$.\ Thus $(Y,\mathcal{T}%
_{1Y},\mathcal{T}_{2Y},E)$\ is also a pair-wise soft $T_{1}-$space.
\end{proof}
\end{proposition}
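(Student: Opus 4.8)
The plan is to reduce the subspace statement to the separation property already assumed on the whole space. Given distinct $x$, $y \in Y$, these are in particular distinct points of $X$, so the pair-wise soft $T_{1}$ hypothesis furnishes a $\mathcal{T}_{1}$-soft open set $(F,E)$ with $x \in (F,E)$, $y \notin (F,E)$ together with a $\mathcal{T}_{2}$-soft open set $(G,E)$ with $y \in (G,E)$, $x \notin (G,E)$. The natural candidates for the separating sets in the subspace are the restrictions $(^{Y}F,E) = \widetilde{Y} \cap (F,E)$ and $(^{Y}G,E) = \widetilde{Y} \cap (G,E)$, which by the definition of the relative topologies belong to $\mathcal{T}_{1Y}$ and $\mathcal{T}_{2Y}$ respectively.

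The core of the argument is four membership checks, and each rests on how the soft membership relation interacts with intersection by $\widetilde{Y}$. For the points that should belong: since $x \in Y$ we have $x \in \widetilde{Y}$, and combined with $x \in (F,E)$ this gives $x \in \widetilde{Y} \cap (F,E) = (^{Y}F,E)$; the symmetric computation yields $y \in (^{Y}G,E)$. For the points that should be excluded I would exploit the quantifier in the definition of non-membership: from $y \notin (F,E)$ there is some parameter $e \in E$ with $y \notin F(e)$, whence $y \notin Y \cap F(e) = {}^{Y}F(e)$ and therefore $y \notin (^{Y}F,E)$; likewise $x \notin (^{Y}G,E)$.

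Assembling these four facts shows that $(^{Y}F,E)$ and $(^{Y}G,E)$ separate $x$ and $y$ in the required pair-wise fashion, so $(Y,\mathcal{T}_{1Y},\mathcal{T}_{2Y},E)$ is pair-wise soft $T_{1}$. I do not expect a genuine obstacle: the only point demanding care is the asymmetry in the soft membership convention---belonging requires the condition to hold at \emph{every} parameter while non-belonging requires failure at \emph{some} parameter---but both behave well under intersection with the absolute soft set $\widetilde{Y}$, the former because $Y(\alpha) = Y$ for all $\alpha$ and the latter because a single offending parameter survives the intersection. This is exactly the mechanism used in the preceding $T_{0}$ subspace proposition, now carried out simultaneously for the $\mathcal{T}_{1}$ and $\mathcal{T}_{2}$ witnesses rather than for just one of them.
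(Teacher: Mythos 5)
Your proposal is correct and follows essentially the same route as the paper's own proof: invoke the pair-wise soft $T_{1}$ property of $(X,\mathcal{T}_{1},\mathcal{T}_{2},E)$ for the pair $x,y$, restrict the two witnesses to $(^{Y}F,E)$ and $(^{Y}G,E)$, and verify the four membership claims using exactly the for-all/for-some asymmetry of soft membership under intersection with $\widetilde{Y}$. No gap to report.
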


\begin{proposition}
Every pair-wise soft $T_{1}-$space is also a pair-wise soft $T_{0}-$space.

\begin{proof}
Straightforward.
\end{proof}
\end{proposition}

\begin{example}
Example \ref{Examp2} is a pair-wise soft $T_{0}-$space which is not a
pair-wise soft $T_{1}-$ space over $X$. Another example is given by taking $%
X=\{h_{1},h_{2}\}$, $E=\{e_{1},e_{2}\}$ and 
\begin{eqnarray*}
\mathcal{T}_{1} &=&\{\Phi ,\widetilde{X},(F,E)\}\text{, and} \\
\mathcal{T}_{2} &=&\{\Phi ,\widetilde{X},(G,E)\}\text{,}
\end{eqnarray*}%
where $(F,E),$ and $(G,E)$ are soft sets over $X$, defined as follows:%
\begin{equation*}
\begin{array}{ll}
F(e_{1})=X, & F(e_{2})=\{h_{2}\}, \\ 
G(e_{1})=\{h_{1}\}, & G(e_{2})=X\text{.}%
\end{array}%
\end{equation*}%
Then $\mathcal{T}_{1}$\ and $\mathcal{T}_{2}$ are soft topologies on $X$.
Therefore $(X,\mathcal{T}_{1},\mathcal{T}_{2},E)$ is a \textit{bi-soft
topological space over }$X$. Both of $(X,\mathcal{T}_{1},E)$ and $(X,%
\mathcal{T}_{2},E)$\ are not soft $T_{1}-$spaces over $X$ and so $(X,%
\mathcal{T}_{1},\mathcal{T}_{2},E)$ is not a pair-wise soft $T_{1}-$space by
Proposition \ref{Prop4}, but it is evident that $(X,\mathcal{T}_{1},\mathcal{%
T}_{2},E)$ is a pair-wise soft $T_{0}-$space over $X$.
\end{example}

\begin{definition}
A bi-soft topological space $(X,\mathcal{T}_{1},\mathcal{T}_{2},E)$ over $X$
is said to be pair-wise soft $T_{2}-$space or pair-wise Hausdorff space if
for every pair of distinct points $x$,$y\in X$, there is a $\mathcal{T}_{1}-$%
soft open set $(F,E)$ and a $\mathcal{T}_{2}-$soft open set $(G,E)$ such
that $x\in (F,E)$ and $y\in (G,E)$ and $(F,E)\cap (G,E)=\Phi $.
\end{definition}

\begin{proposition}
Let $(X,\mathcal{T}_{1},\mathcal{T}_{2},E)$ be a bi-soft topological space
over $X$. If $(X,\mathcal{T}_{1},\mathcal{T}_{2},E)$ is a pair-wise soft $%
T_{2}-$space over $X$ then $(X,\mathcal{T}_{1e},\mathcal{T}_{2e})$\ is a
pair-wise $T_{2}-$space for each $e\in E$.
\end{proposition}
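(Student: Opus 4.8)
The plan is to take an arbitrary parameter $e \in E$ and a pair of distinct points $x, y \in X$, and to show that the parametrized bitopological space $(X, \mathcal{T}_{1e}, \mathcal{T}_{2e})$ separates $x$ and $y$ in the pair-wise $T_2$ sense, i.e.\ to produce a $\mathcal{T}_{1e}$-open set $U$ and a $\mathcal{T}_{2e}$-open set $V$ with $x \in U$, $y \in V$ and $U \cap V = \emptyset$. First I would invoke the hypothesis that $(X, \mathcal{T}_1, \mathcal{T}_2, E)$ is pair-wise soft $T_2$: for the distinct points $x, y$ there exist a $\mathcal{T}_1$-soft open set $(F,E)$ and a $\mathcal{T}_2$-soft open set $(G,E)$ with $x \in (F,E)$, $y \in (G,E)$ and $(F,E) \cap (G,E) = \Phi$.

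Next I would unwind these soft membership and intersection conditions parameter-wise. By the definition of belonging to a soft set, $x \in (F,E)$ means $x \in F(\alpha)$ for every $\alpha \in E$, and in particular $x \in F(e)$; likewise $y \in G(e)$. By the definition of soft intersection (Definition of intersection, $H(\alpha) = F(\alpha)\cap G(\alpha)$) together with $(F,E)\cap(G,E)=\Phi$, we have $F(\alpha)\cap G(\alpha)=\emptyset$ for all $\alpha \in E$, hence in particular $F(e)\cap G(e)=\emptyset$. Finally, by the very definition of $\mathcal{T}_{1e}$ and $\mathcal{T}_{2e}$ in Proposition~\ref{Propref}, the crisp sets $F(e)$ and $G(e)$ are members of $\mathcal{T}_{1e}$ and $\mathcal{T}_{2e}$ respectively. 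Setting $U = F(e)$ and $V = G(e)$ then gives exactly the required separation, so $(X, \mathcal{T}_{1e}, \mathcal{T}_{2e})$ is pair-wise $T_2$ for each $e \in E$.

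The argument is essentially a routine ``evaluate at $e$'' translation, so I do not anticipate a genuine obstacle; the only point requiring minor care is the disjointness step, where one must read off $F(e)\cap G(e)=\emptyset$ from the definition of the NULL soft set $\Phi$ rather than merely from $(F,E)\cap(G,E)$ being empty in some weaker sense. Because $\Phi$ is defined by $\Phi(\alpha)=\emptyset$ for all $\alpha$, the equality $(F,E)\cap(G,E)=\Phi$ forces $F(\alpha)\cap G(\alpha)=\emptyset$ at \emph{every} parameter, which is stronger than what a $T_2$ conclusion at a single $e$ needs, and this is what makes the proof go through uniformly across all $e \in E$. I would therefore present the proof tersely, emphasizing only that the soft separating data restrict coordinatewise to the crisp separating data guaranteed by Proposition~\ref{Propref}.
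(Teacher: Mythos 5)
Your proposal is correct and follows essentially the same route as the paper's own proof: both invoke the pair-wise soft $T_{2}$ hypothesis to obtain $(F,E)\in\mathcal{T}_{1}$, $(G,E)\in\mathcal{T}_{2}$ and then evaluate at the parameter $e$ to get $x\in F(e)\in\mathcal{T}_{1e}$, $y\in G(e)\in\mathcal{T}_{2e}$ and $F(e)\cap G(e)=\emptyset$. Your extra remark that $(F,E)\cap(G,E)=\Phi$ forces emptiness at every parameter is exactly the (implicit) justification the paper relies on, so no changes are needed.
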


\begin{proof}
Suppose that $(X,\mathcal{T}_{1},\mathcal{T}_{2},E)$ is a pair-wise soft $%
T_{2}-$space over $X$. For any $e\in E$%
\begin{eqnarray*}
\mathcal{T}_{1e} &=&\{F(e)\text{ }|\text{ }(F,E)\in \mathcal{T}_{1}\text{ }\}
\\
\mathcal{T}_{2e} &=&\{G(e)\text{ }|\text{ }(G,E)\in \mathcal{T}_{2}\text{ }%
\}.
\end{eqnarray*}%
Let $x$,$y\in X$ be such that $x\neq y$, then there exist $(F,E)\in \mathcal{%
T}_{1}$, $(G,E)\in \mathcal{T}_{2}$ such that%
\begin{equation*}
x\in (F,E)\text{, }y\in (G,E)\text{ and }(F,E)\cap (G,E)=\Phi
\end{equation*}%
This implies that%
\begin{equation*}
x\in F(e)\in \mathcal{T}_{1e}\text{, }y\in G(e)\in \mathcal{T}_{2e}\text{
and }F(e)\cap G(e)=\emptyset \text{.}
\end{equation*}%
Thus $(X,\mathcal{T}_{1e},\mathcal{T}_{2e})$\ is a pair-wise $T_{2}-$space
for each $e\in E$.
\end{proof}

\begin{remark}
Let $(X,\mathcal{T}_{1},\mathcal{T}_{2},E)$ be a pair-wise soft $T_{2}-$%
space over $X$ then $(X,\mathcal{T}_{1},E)$\ and $(X,\mathcal{T}_{2},E)$\
need not be soft $T_{2}-$spaces over $X$.
\end{remark}

\begin{example}
Let $X$ be an infinite set and $E$ be the set of parameters. We define%
\begin{eqnarray*}
\mathcal{T}_{1} &=&\{(F,E)|(F,E)\text{ is a soft set over }X\}\text{ ' Soft
discrete topology over }X\text{'} \\
\mathcal{T}_{2} &=&\{\Phi \}\cup \{(F,E)|(F,E)\text{ is a soft set over }X%
\text{ and }F^{c}(e)\text{ is finite for all }e\in E\}\text{.}
\end{eqnarray*}%
Obviously $\mathcal{T}_{1}$ is a soft topology over $X$. We verify for $%
\mathcal{T}_{2}$\ as:

\begin{enumerate}
\item $\Phi \in \mathcal{T}_{2}$ and $\tilde{X}^{c}=\Phi \Rightarrow $ $%
\tilde{X}\in \mathcal{T}_{2}$.

\item Let $\{(F_{i},E)|$ $i\in I$ $\}$ be a collection of soft sets in $%
\mathcal{T}_{2}$. For any $e\in E$, $F_{i}^{c}(e)$ is finite for all $i\in I$%
\ so that $\underset{i\in I}{\cap }F_{i}^{c}(e)=(\underset{i\in I}{\cup }%
F_{i})^{c}(e)$ is also finite. This means that $\underset{i\in I}{\cup }%
(F_{i},E)\in \mathcal{T}_{2}$.

\item Let $(F,E)$, $(G,E)\in \mathcal{T}_{2}$. Since $F^{c}(e)$, and $%
G^{c}(e)$ are finite sets so as their union $F^{c}(e)\cup G^{c}(e)$. Thus $%
(F\cap G)^{c}(e)$ is finite for all $e\in E$ which shows that $(F,E)\cap
(G,E)\in \mathcal{T}_{2}$.
\end{enumerate}

Then $\mathcal{T}_{1}$ and $\mathcal{T}_{2}$ are soft topologies on $X$. For
any $x,y\in X$ where $x\neq y$, $(x,E)\in \mathcal{T}_{1}$ and $(x,E)^{c}\in 
\mathcal{T}_{2}$ such that 
\begin{equation*}
x\in (x,E),y\in (x,E)^{c}\text{ and }(x,E)\cap (x,E)^{c}=\Phi \text{.}
\end{equation*}%
Thus $(X,\mathcal{T}_{1},\mathcal{T}_{2},E)$ is a pair-wise soft $T_{2}-$%
space over $X$.

Now, we suppose that there are soft sets $(G_{1},E)$, $(G_{2},E)\in \mathcal{%
T}_{2}$ such that 
\begin{equation*}
x\in (G_{1},E),y\in (G_{2},E)\text{ and }(G_{1},E)\cap (G_{2},E)=\Phi \text{.%
}
\end{equation*}%
But then, we must have $(G_{1},E)\tilde{\subset}(G_{2},E)^{c}\Rightarrow
G_{1}(e)\subseteq G_{2}^{c}(e)$ for all $e\in E$, which is not possible
because $G_{1}(e)$ is infinite and $G_{2}^{c}(e)$ is finite. Therefore $(X,%
\mathcal{T}_{2},E)$ is not a soft $T_{2}-$space over $X$.
\end{example}

\begin{remark}
Let $(X,\mathcal{T}_{1},E)$\ and $(X,\mathcal{T}_{2},E)$\ be soft $T_{2}-$%
spaces over $X$ then $(X,\mathcal{T}_{1},\mathcal{T}_{2},E)$ need not be a
pair-wise soft $T_{2}-$space over $X$.
\end{remark}

\begin{example}
Let $X$ be an infinite set and $E$ be the set of parameters. Let $x\neq y$,
where $x,y\in X$, we define%
\begin{gather*}
\mathcal{T}(x)_{1}=\{(F,E)|x\in (F,E)^{c}\text{ is a soft set over }X\}\cup
\{(F,E)|(F,E)\text{ is a } \\
\text{soft set over }X\text{ and }F^{c}(e)\text{ is finite for all }e\in
E\}\} \\
\mathcal{T}(y)_{2}=\{(G,E)|y\in (G,E)\text{ is a soft set over }X\}\cup
\{(G,E)|(G,E)\text{ is a } \\
\text{soft set over }X\text{ and }G^{c}(e)\text{ is finite for all }e\in
E\}\}\text{.}
\end{gather*}%
We verify for $\mathcal{T}(x)_{1}$ as:

\begin{enumerate}
\item $x\notin \Phi \Rightarrow \Phi \in \mathcal{T}(x)_{1}$ and $\tilde{X}%
^{c}=\Phi \Rightarrow $ $\tilde{X}\in \mathcal{T}(x)_{1}$.

\item Let $\{(F_{i},E)|$ $i\in I$ $\}$ be a collection of soft sets in $%
\mathcal{T}(x)_{1}$. We have following three cases:

\begin{description}
\item[(i)] If $x\in (F_{i},E)^{c}$ for all $i\in I$ then $x\in \underset{%
i\in I}{\cap }(F_{i},E)^{c}$ so, in this case, $\underset{i\in I}{\cup }%
(F_{i},E)\in \mathcal{T}(x)_{1}$.

\item[(ii)] If $(F_{i},E)$\ is such that $F_{i}^{c}(e)$ is finite for all $%
e\in E$ so $F_{i}^{c}(e)$ is finite for all $i\in I$\ implies that $\underset%
{i\in I}{\cap }F_{i}^{c}(e)=(\underset{i\in I}{\cup }F_{i})^{c}(e)$ is also
finite. This means that $\underset{i\in I}{\cup }(F_{i},E)\in \mathcal{T}%
(x)_{1}$.

\item[(iii)] If there exist some $j,k\in I$ such that $x\in (F_{j},E)^{c}$
and $F_{k}^{c}(e)$ is finite for all $e\in E$. It means that $\underset{i\in
I}{\cap }F_{i}^{c}(e)(\subset F_{k}^{c}(e))$ is also finite for all $e\in E$
and by definition $\underset{i\in I}{\cup }(F_{i},E)\in \mathcal{T}(x)_{1}$.
\end{description}

\item Let $(F_{1},E)$, $(F_{2},E)\in \mathcal{T}(x)_{1}$. Again we have
following three cases:

\begin{description}
\item[(i)] If $x\in (F_{1},E)^{c}$ and $x\in (F_{2},E)^{c}$ then $x\in
(F_{1},E)^{c}\cup (F_{2},E)^{c}$ and therefore $(F_{1},E)\cap (F_{2},E)\in 
\mathcal{T}(x)_{1}$.

\item[(ii)] If $F_{1}^{c}(e)$ and $F_{2}^{c}(e)$ are finite for all $e\in E$
then their union $F_{1}^{c}(e)\cup F_{2}^{c}(e)$ is also finite. Thus $%
(F_{1}\cap F_{2})^{c}(e)$ is finite for all $e\in E$ which shows that $%
(F_{1},E)\cap (F_{2},E)\in \mathcal{T}(x)_{1}$.

\item[(iii)] If $x\in (F_{1},E)^{c}$ and $F_{2}^{c}(e)$ is finite for all $%
e\in E$ then $x\in F_{1}^{c}(e)\cup F_{2}^{c}(e)=(F_{1}\cap F_{2})^{c}(e)$
and so $x\in ((F_{1},E)\cap (F_{2},E))^{c}$. Thus $(F_{1},E)\cap
(F_{2},E)\in \mathcal{T}(x)_{1}$.
\end{description}
\end{enumerate}

Hence $\mathcal{T}(x)_{1}$ is a soft topology on $X$. For any $p,q\in X$
where $x\neq p$, $x\in (p,E)^{c}\Rightarrow (p,E)\in \mathcal{T}(x)_{1}$ and 
$(p,E)^{c}\in \mathcal{T}(x)_{1}$ such that 
\begin{equation*}
p\in (p,E),q\in (p,E)^{c}\text{ and }(p,E)\cap (p,E)^{c}=\Phi \text{.}
\end{equation*}%
Thus $(X,\mathcal{T}(x)_{1},E)$ is a soft $T_{2}-$space over $X$. Similarly $%
(X,\mathcal{T}(y)_{2},E)$ is a soft $T_{2}-$space over $X$.

Now, $(X,\mathcal{T}(x)_{1},\mathcal{T}(y)_{2},E)$ is a bi-soft topological
space over $X$. For $x,y\in X$, we cannot find any soft sets $(F,E)\in 
\mathcal{T}(x)_{1}$ and $(G,E)\in \mathcal{T}(y)_{2}$ such that%
\begin{equation*}
x\in (F,E),y\in (G,E)\text{ and }(F,E)\cap (G,E)=\Phi
\end{equation*}%
because $y\in (G,E)$ and $(F,E)\cap (G,E)=\Phi $ implies that we must have $%
(F,E)\tilde{\subset}(G,E)^{c}$ which means that $G^{c}(e)$ is finite for all 
$e\in E$ and $F(e)\subseteq G^{c}(e)$ for all $e\in E$, and this is not
possible for $F(e)$ is infinite and $G^{c}(e)$ is finite. Therefore $(X,%
\mathcal{T}(x)_{1},\mathcal{T}(y)_{2},E)\ $is not a pair-wise soft $T_{2}-$%
space over $X$.
\end{example}

\begin{remark}
Let $(X,\mathcal{T}_{1}\vee \mathcal{T}_{2},E)$\ be soft $T_{2}-$space over $%
X$ then $(X,\mathcal{T}_{1},\mathcal{T}_{2},E)$ need not be a pair-wise soft 
$T_{2}-$space over $X$.
\end{remark}

\begin{proposition}
Let $(X,\mathcal{T}_{1},\mathcal{T}_{2},E)$ be a bi-soft topological space
over $X$. If $(X,\mathcal{T}_{1},\mathcal{T}_{2},E)$\ is a pair-wise soft $%
T_{2}-$space then $(X,\mathcal{T}_{1}\vee \mathcal{T}_{2},E)$\ is also a
soft $T_{2}-$space.

\begin{proof}
Let $x$,$y\in X$ be such that $x\neq y$. There exist $(F,E)\in \mathcal{T}%
_{1}$ and $(G,E)\in \mathcal{T}_{2}$ such that $x\in (F,E)$, $y\in (G,E)$
and $(F,E)\cap (G,E)=\Phi $. In either case $(F,E),(G,E)\in \mathcal{T}%
_{1}\vee \mathcal{T}_{2}$.\ Hence $(X,\mathcal{T}_{1}\vee \mathcal{T}_{2},E)$%
\ is a soft $T_{2}-$space over $X$.
\end{proof}
\end{proposition}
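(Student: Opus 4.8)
The plan is to mimic the structure of the proofs of Proposition~\ref{Prop1} and Proposition~\ref{Prop3}, since the three statements are parallel and differ only in the separation condition imposed on the pair of points. The essential observation is that, by the very definition of the supremum soft topology, $\mathcal{T}_{1}\cup\mathcal{T}_{2}\subseteq\mathcal{T}_{1}\vee\mathcal{T}_{2}$, so every soft set that is soft open in $\mathcal{T}_{1}$ or in $\mathcal{T}_{2}$ is automatically soft open in $\mathcal{T}_{1}\vee\mathcal{T}_{2}$.

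First I would fix an arbitrary pair of distinct points $x,y\in X$. Applying the hypothesis that $(X,\mathcal{T}_{1},\mathcal{T}_{2},E)$ is a pair-wise soft $T_{2}$-space, I obtain a $\mathcal{T}_{1}$-soft open set $(F,E)$ and a $\mathcal{T}_{2}$-soft open set $(G,E)$ with $x\in(F,E)$, $y\in(G,E)$, and $(F,E)\cap(G,E)=\Phi$. The goal is then to exhibit, for these same two points, a pair of soft open sets in $\mathcal{T}_{1}\vee\mathcal{T}_{2}$ that witness the soft $T_{2}$ condition.

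The key step is simply to promote $(F,E)$ and $(G,E)$ into the supremum topology: since $\mathcal{T}_{1}\subseteq\mathcal{T}_{1}\vee\mathcal{T}_{2}$ and $\mathcal{T}_{2}\subseteq\mathcal{T}_{1}\vee\mathcal{T}_{2}$, the same two sets are soft open in $\mathcal{T}_{1}\vee\mathcal{T}_{2}$. The membership relations $x\in(F,E)$ and $y\in(G,E)$ together with the disjointness $(F,E)\cap(G,E)=\Phi$ are purely set-theoretic facts about the soft sets themselves and do not depend on which topology one regards them as living in, so they persist verbatim. This is precisely the definition of a soft $T_{2}$-space, and as $x,y$ were arbitrary distinct points, I would conclude that $(X,\mathcal{T}_{1}\vee\mathcal{T}_{2},E)$ is a soft $T_{2}$-space.

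I do not anticipate any genuine obstacle here: unlike the reverse implications (which fail, as the preceding remarks and counterexamples show), this direction requires no construction of new separating sets and no real interaction between the two topologies beyond the containment $\mathcal{T}_{1}\cup\mathcal{T}_{2}\subseteq\mathcal{T}_{1}\vee\mathcal{T}_{2}$. The only point worth flagging is that the disjointness condition is inherited automatically rather than needing to be re-established, which is what distinguishes this easy direction from the harder converses.
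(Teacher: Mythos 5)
Your proposal is correct and follows essentially the same route as the paper's own proof: take the separating soft sets $(F,E)\in\mathcal{T}_{1}$ and $(G,E)\in\mathcal{T}_{2}$ furnished by the pair-wise soft $T_{2}$ hypothesis, observe that both lie in $\mathcal{T}_{1}\vee\mathcal{T}_{2}$ since it contains $\mathcal{T}_{1}\cup\mathcal{T}_{2}$, and note that the membership and disjointness conditions carry over unchanged. Your added remark that disjointness is a property of the soft sets themselves, independent of the ambient topology, is a correct and slightly more explicit justification than the paper gives.
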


\begin{example}
Let $X=\{h_{1},h_{2},h_{3}\}$, $E=\{e_{1},e_{2}\}$ and%
\begin{eqnarray*}
\mathcal{T}_{1} &=&\{\Phi ,\widetilde{X}%
,(F_{1},E),(F_{2},E),(F_{3},E),(F_{4},E)\}\text{, and} \\
\mathcal{T}_{2} &=&\{\Phi ,\widetilde{X},(G_{1},E),(G_{2},E),(G_{3},E)\}%
\text{,}
\end{eqnarray*}%
where $(F_{1},E),$ $(F_{2},E),$ $(F_{3},E),$ $(F_{4},E),$ $(G_{1},E),$ $%
(G_{2},E),$ and $(G_{3},E)$ are soft sets over $X$, defined as follows:%
\begin{equation*}
\begin{array}{ll}
F_{1}(e_{1})=\{h_{1}\}, & F_{1}(e_{2})=\{h_{1}\}, \\ 
F_{2}(e_{1})=\{h_{2}\}, & F_{2}(e_{2})=\{h_{1},h_{2}\}, \\ 
F_{3}(e_{1})=\{\}, & F_{3}(e_{2})=\{h_{1}\}, \\ 
F_{4}(e_{1})=\{h_{1},h_{2}\}, & F_{4}(e_{2})=\{h_{1},h_{2}\},%
\end{array}%
\end{equation*}%
and%
\begin{equation*}
\begin{array}{ll}
G_{1}(e_{1})=\{h_{3}\}, & G_{1}(e_{2})=\{h_{3}\}, \\ 
G_{2}(e_{1})=\{h_{2}\}, & G_{2}(e_{2})=\{h_{2}\}, \\ 
G_{3}(e_{1})=\{h_{2},h_{3}\}, & G_{3}(e_{2})=\{h_{2},h_{3}\}\text{.}%
\end{array}%
\end{equation*}%
Then $\mathcal{T}_{1}$\ and $\mathcal{T}_{2}$ are soft topologies on $X$.
Therefore $(X,\mathcal{T}_{1},\mathcal{T}_{2},E)$ is a \textit{bi-soft
topological space over }$X$. One can easily see that $(X,\mathcal{T}_{1},%
\mathcal{T}_{2},E)$ is not a pair-wise \textit{soft} $T_{2}-$\textit{space
over }$X$ because $h_{1},h_{2}\in X$, and we cannot find any soft sets $%
(F,E)\in \mathcal{T}_{1}$ or $(G,E)\in \mathcal{T}_{2}$ such that 
\begin{equation*}
h_{2}\in (F,E)\text{, }h_{1}\in (G,E)\text{ and }(F,E)\cap (G,E)=\Phi \text{.%
}
\end{equation*}%
Now, we have%
\begin{eqnarray*}
\mathcal{T}_{1}\vee \mathcal{T}_{2} &=&\{\Phi ,\widetilde{X}%
,(F_{1},E),(F_{2},E),(F_{3},E),(F_{4},E),(G_{1},E),(G_{2},E),(G_{3},E), \\
&&(H_{1},E),(H_{2},E),(H_{3},E)\}
\end{eqnarray*}%
where%
\begin{equation*}
\begin{array}{ll}
H_{1}(e_{1})=\{h_{1},h_{3}\}, & H_{1}(e_{2})=\{h_{1},h_{3}\}, \\ 
H_{2}(e_{1})=\{h_{2},h_{3}\}, & H_{2}(e_{2})=X, \\ 
H_{3}(e_{1})=\{h_{3}\}, & H_{3}(e_{2})=\{h_{1},h_{3}\},%
\end{array}%
\end{equation*}%
so $(X,\mathcal{T}_{1}\vee \mathcal{T}_{2},E)$ is a \textit{soft topological
space over }$X$.

For $h_{1},h_{2}\in X$, $(F_{1},E)$, $(G_{2},E)\in \mathcal{T}_{1}\vee 
\mathcal{T}_{2}$ such that 
\begin{equation*}
h_{1}\in (F_{1},E)\text{, }h_{2}\in (G_{2},E)\text{ and }(F_{1},E)\cap
(G_{2},E)=\Phi \text{.}
\end{equation*}%
For $h_{1},h_{3}\in X$, $(F_{1},E),(G_{3},E)\in \mathcal{T}_{1}\vee \mathcal{%
T}_{2}$ such that 
\begin{equation*}
h_{1}\in (F_{1},E)\text{, }h_{3}\in (G_{3},E)\text{ and }(F_{1},E)\cap
(G_{3},E)=\Phi \text{.}
\end{equation*}%
For $h_{2},h_{3}\in X$ and $(G_{2},E),(G_{1},E)\in \mathcal{T}_{1}\vee 
\mathcal{T}_{2}$ such that 
\begin{equation*}
h_{2}\in (G_{2},E)\text{, }h_{3}\in (G_{1},E)\text{ and }(F_{1},E)\cap
(G_{1},E)=\Phi \text{.}
\end{equation*}%
Thus $(X,\mathcal{T}_{1}\vee \mathcal{T}_{2},E)$ is a \textit{soft }$T_{2}-$%
\textit{\ space over }$X$.
\end{example}

\begin{proposition}
Let $(X,\mathcal{T}_{1},\mathcal{T}_{2},E)$ be a bi-soft topological space
over $X$ and $Y$ be a non-empty subset of $X$. If $(X,\mathcal{T}_{1},%
\mathcal{T}_{2},E)$\ is a pair-wise soft $T_{2}-$space then $(Y,\mathcal{T}%
_{1Y},\mathcal{T}_{2Y},E)$\ is also a pair-wise soft $T_{2}-$space.
\end{proposition}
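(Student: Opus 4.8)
The plan is to follow the same restriction argument already used for the pair-wise soft $T_{0}$ and $T_{1}$ subspaces, adding only the verification that disjointness of the two witnessing soft open sets survives restriction to $Y$. First I would fix $x$, $y\in Y$ with $x\neq y$. Since $Y\subset X$, these are also distinct points of $X$, so the pair-wise soft $T_{2}$ hypothesis furnishes a $\mathcal{T}_{1}-$soft open set $(F,E)$ and a $\mathcal{T}_{2}-$soft open set $(G,E)$ with $x\in (F,E)$, $y\in (G,E)$ and $(F,E)\cap (G,E)=\Phi $.

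Next I would pass to the relative soft topologies. By the definition of the soft relative topology, $(^{Y}F,E)=\widetilde{Y}\cap (F,E)\in \mathcal{T}_{1Y}$ and $(^{Y}G,E)=\widetilde{Y}\cap (G,E)\in \mathcal{T}_{2Y}$. For the membership conditions, $x\in Y$ gives $x\in \widetilde{Y}$, and together with $x\in (F,E)$ this yields $x\in \widetilde{Y}\cap (F,E)=(^{Y}F,E)$; symmetrically $y\in \widetilde{Y}$ and $y\in (G,E)$ give $y\in (^{Y}G,E)$.

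The one genuinely new step, and the only place needing care, is showing that the restricted sets remain disjoint, that is $(^{Y}F,E)\cap (^{Y}G,E)=\Phi $. I would verify this parameterwise: for each $\alpha \in E$,
\begin{equation*}
{}^{Y}F(\alpha )\cap {}^{Y}G(\alpha )=(Y\cap F(\alpha ))\cap (Y\cap G(\alpha ))=Y\cap (F(\alpha )\cap G(\alpha )).
\end{equation*}
Since $(F,E)\cap (G,E)=\Phi $ means $F(\alpha )\cap G(\alpha )=\emptyset $ for every $\alpha \in E$, each such intersection is empty, so $(^{Y}F,E)\cap (^{Y}G,E)=\Phi $. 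Combining the three facts shows that $x$ and $y$ are separated by disjoint relative soft open sets, one from $\mathcal{T}_{1Y}$ and one from $\mathcal{T}_{2Y}$, whence $(Y,\mathcal{T}_{1Y},\mathcal{T}_{2Y},E)$ is a pair-wise soft $T_{2}-$space. I expect no real obstacle: the argument is routine once disjointness is seen to be preserved under intersection with $\widetilde{Y}$, which is immediate from the parameterwise definitions of intersection and of $(^{Y}F,E)$.
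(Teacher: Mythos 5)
Your argument is correct and coincides with the paper's own proof: both take the witnessing sets $(F,E)\in \mathcal{T}_{1}$, $(G,E)\in \mathcal{T}_{2}$ for $x,y$, intersect with $\widetilde{Y}$, and check parameterwise that $^{Y}F(\alpha )\cap {}^{Y}G(\alpha )=Y\cap (F(\alpha )\cap G(\alpha ))=\emptyset $. No gaps; nothing further is needed.
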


\begin{proof}
Let $x$,$y\in Y$ be such that $x\neq y$. Then there exist soft sets $%
(F,E)\in \mathcal{T}_{1}$ and $(G,E)\in \mathcal{T}_{2}$ such that%
\begin{equation*}
x\in (F,E),y\in (G,E)\text{ and }(F,E)\cap (G,E)=\Phi \text{.}
\end{equation*}%
For each $e\in E$, $x\in F\left( e\right) $, $y\in G\left( e\right) $\ and $%
F\left( e\right) \cap G\left( e\right) =\emptyset $. This implies that $x\in
Y\cap F\left( e\right) $, $y\in Y\cap G(e)$ and 
\begin{eqnarray*}
^{Y}F(e)\cap ^{Y}G(e) &=&(Y\cap F\left( e\right) )\cap (Y\cap G(e)) \\
&=&Y\cap (F(e)\cap G\left( e\right) ) \\
&=&Y\cap \emptyset =\emptyset \text{.}
\end{eqnarray*}%
\ Hence $x\in (^{Y}F,E)\in \mathcal{T}_{1Y},$ $y\in (^{Y}G,E)\in \mathcal{T}%
_{2Y}$\ and $(^{Y}F,E)\cap (^{Y}G,E)=\Phi $ where. Thus $(Y,\mathcal{T}_{1Y},%
\mathcal{T}_{2Y},E)$\ is a pair-wise soft $T_{2}-$space.
\end{proof}

\begin{proposition}
\label{Prop5}Every pair-wise soft $T_{2}-$space is also a pair-wise soft $%
T_{1}-$space.

\begin{proof}
If $(X,\mathcal{T}_{1},\mathcal{T}_{2},E)$\ is a pair-wise soft $T_{2}-$%
space and $x$,$y\in X$ be such that $x\neq y$ then there exist soft sets $%
(F,E)\in \mathcal{T}_{1}$ and $(G,E)\in \mathcal{T}_{2}$ such that%
\begin{equation*}
x\in (F,E),y\in (G,E)\text{ and }(F,E)\cap (G,E)=\Phi \text{.}
\end{equation*}%
As $(F,E)\cap (G,E)=\Phi $, so $x\notin (G,E)$ and$\ y\notin (F,E)$. Hence $%
(X,\mathcal{T}_{1},\mathcal{T}_{2},E)$\ is a pair-wise soft $T_{1}-$space.
\end{proof}
\end{proposition}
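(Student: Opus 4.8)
The plan is to establish the pair-wise soft $T_{1}$ separation directly from the pair-wise soft $T_{2}$ hypothesis, working with an arbitrary pair of distinct points. First I would fix $x,y\in X$ with $x\neq y$ and invoke the pair-wise soft $T_{2}$ property to obtain a $\mathcal{T}_{1}$-soft open set $(F,E)$ and a $\mathcal{T}_{2}$-soft open set $(G,E)$ satisfying $x\in (F,E)$, $y\in (G,E)$, and $(F,E)\cap (G,E)=\Phi$. Two of the four conditions demanded by the definition of a pair-wise soft $T_{1}$-space, namely $x\in (F,E)$ and $y\in (G,E)$, are then already in hand, so the remaining work reduces to verifying the two non-membership conditions $y\notin (F,E)$ and $x\notin (G,E)$.

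The key step is to translate the disjointness $(F,E)\cap (G,E)=\Phi$ into those non-memberships by unwinding the parameter-wise definition of belonging. By the definition of the intersection of soft sets together with that of the null soft set, $(F,E)\cap (G,E)=\Phi$ means $F(\alpha)\cap G(\alpha)=\emptyset$ for every $\alpha\in E$. Since $x\in (F,E)$ gives $x\in F(\alpha)$ for all $\alpha\in E$, disjointness forces $x\notin G(\alpha)$ for every $\alpha\in E$; in particular $x\notin G(\alpha)$ for some $\alpha$, which is precisely $x\notin (G,E)$. The argument for $y\notin (F,E)$ is symmetric: $y\in (G,E)$ yields $y\in G(\alpha)$ for all $\alpha$, and $F(\alpha)\cap G(\alpha)=\emptyset$ then gives $y\notin F(\alpha)$ for all $\alpha$, whence $y\notin (F,E)$.

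Having checked all four conditions for the chosen pair $x,y$, and since the pair was arbitrary, I would conclude that $(X,\mathcal{T}_{1},\mathcal{T}_{2},E)$ is a pair-wise soft $T_{1}$-space. I do not anticipate a genuine obstacle here; the only point meriting care is the asymmetry between the ``for all $\alpha$'' quantifier in the definition of $x\in (F,E)$ and the ``for some $\alpha$'' quantifier in the definition of $x\notin (G,E)$. Because disjointness holds at \emph{every} parameter, the stronger ``for all $\alpha$'' non-membership is actually available and comfortably implies the weaker ``for some $\alpha$'' condition that the definition requires, so no edge case can arise.
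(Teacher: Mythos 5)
Your proposal is correct and follows essentially the same route as the paper: invoke the pair-wise soft $T_{2}$ property for a pair of distinct points and derive the two non-membership conditions from the disjointness $(F,E)\cap (G,E)=\Phi$. The only difference is that you spell out the parameter-wise quantifier bookkeeping (that disjointness at every $\alpha\in E$ gives $x\notin G(\alpha)$ for all $\alpha$, which is stronger than the ``for some $\alpha$'' required by the definition of non-membership), a step the paper asserts without elaboration.
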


\begin{remark}
The converse of Proposition \ref{Prop5} is not true i.e. a pair-wise soft $%
T_{1}-$space need not be a pair-wise soft $T_{2}-$space.
\end{remark}

\begin{example}
The bi-soft topological space $(X,\mathcal{T}_{1},\mathcal{T}_{2},E)$ in
Example \ref{Examp3} is a pair-wise soft $T_{1}-$space over $X$ which is not
a pair-wise \textit{soft Hausdorff space} over $X$.
\end{example}

\begin{remark}
For any \textit{soft set }$(F,E)$ over $X$, $\overline{(F,E)}^{\mathcal{T}}$
will be used to denote the \textit{soft closure}$^{\text{Definition \ref%
{softcl}}}$ of $(F,E)$ with respect to the soft topological space $(X,%
\mathcal{T},E)$ over $X$.
\end{remark}

\begin{theorem}
\label{thm1}Let $(X,\mathcal{T}_{1},\mathcal{T}_{2},E)$ be a bi-soft
topological space over $X$. Then the following are equivalent:

\begin{enumerate}
\item $(X,\mathcal{T}_{1},\mathcal{T}_{2},E)$ be a pair-wise soft Hausdorff
space over $X$.

\item Let $x\in X$, for each point $y$ distinct from $x$, there is a soft
set $(F,E)\in \mathcal{T}_{1}$ such that $x\in (F,E)$ and $y\in \tilde{X}-%
\overline{(F,E)}^{\mathcal{T}_{2}}$.
\end{enumerate}

\begin{proof}
$(1)\Rightarrow (2):$

Suppose that $(X,\mathcal{T}_{1},\mathcal{T}_{2},E)$ is a pair-wise soft
Hausdorff space over $X$ and $x\in X$. For any $y\in X$ such that $y\neq x$,
pair-wise soft Hausdorffness implies that there exist soft sets $F,E)\in 
\mathcal{T}_{1}$ and $(G,E)\in \mathcal{T}_{2}$ such that%
\begin{equation*}
x\in (F,E),y\in (G,E)\text{ and }(F,E)\cap (G,E)=\Phi \text{.}
\end{equation*}%
So that $(F,E)\tilde{\subset}(G,E)^{c}$. Since $\overline{(F,E)}^{\mathcal{T}%
_{2}}$ is the smallest soft closed set in $\mathcal{T}_{2}$ that contains $%
(F,E)$ and $(G,E)^{c}$ is a soft closed set in $\mathcal{T}_{2}$ so $%
\overline{(F,E)}^{\mathcal{T}_{2}}\tilde{\subset}(G,E)^{c}\Rightarrow (G,E)%
\tilde{\subset}(\overline{(F,E)}^{\mathcal{T}_{2}})^{c}$. Thus $y\in (G,E)%
\tilde{\subset}(\overline{(F,E)}^{\mathcal{T}_{2}})^{c}$ or $y\in \tilde{X}-%
\overline{(F,E)}^{\mathcal{T}_{2}}$.

$(2)\Rightarrow (1):$

Let $x,y\in X$ be such that $x\neq y$. By $(2)$ there is a soft set $%
(F,E)\in \mathcal{T}_{1}$ such that $x\in (F,E)$ and $y\in \tilde{X}-%
\overline{(F,E)}^{\mathcal{T}_{2}}$. As $\overline{(F,E)}^{\mathcal{T}_{2}}$
is a soft closed set in $\mathcal{T}_{2}$\ so $(G,E)=\tilde{X}-\overline{%
(F,E)}^{\mathcal{T}_{2}}\in \mathcal{T}_{2}$. Now $x\in (F,E),y\in (G,E)$
and 
\begin{eqnarray*}
(F,E)\cap (G,E) &=&(F,E)\cap (\tilde{X}-\overline{(F,E)}^{\mathcal{T}_{2}})
\\
&\tilde{\subset}&(F,E)\cap (\tilde{X}-(F,E))\text{ \ }\because (F,E)\tilde{%
\subset}\overline{(F,E)}^{\mathcal{T}_{2}} \\
&=&\Phi \text{.}
\end{eqnarray*}%
Thus $(F,E)\cap (G,E)=\Phi $.
\end{proof}
\end{theorem}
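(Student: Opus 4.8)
The plan is to establish the equivalence by proving the two implications $(1)\Rightarrow(2)$ and $(2)\Rightarrow(1)$ separately. In both directions the engine of the argument is a single translation: a disjointness statement $(F,E)\cap(G,E)=\Phi$ is equivalent, parameter-by-parameter, to the soft inclusion $(F,E)\tilde{\subset}(G,E)^{c}$, since $F(e)\cap G(e)=\emptyset$ for all $e$ is the same as $F(e)\subseteq X-G(e)$ for all $e$. Coupling this with the defining minimality of the soft closure from Definition \ref{softcl} lets me move back and forth between Hausdorff-type separation and closure-complement membership.

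For $(1)\Rightarrow(2)$, I would fix $x\in X$ and an arbitrary $y\neq x$ and invoke pair-wise soft Hausdorffness to produce $(F,E)\in\mathcal{T}_{1}$ and $(G,E)\in\mathcal{T}_{2}$ with $x\in(F,E)$, $y\in(G,E)$, and $(F,E)\cap(G,E)=\Phi$. The disjointness gives $(F,E)\tilde{\subset}(G,E)^{c}$, and since $(G,E)\in\mathcal{T}_{2}$ the set $(G,E)^{c}$ is soft closed in $\mathcal{T}_{2}$. Because $\overline{(F,E)}^{\mathcal{T}_{2}}$ is the intersection of all soft closed supersets of $(F,E)$, it is contained in the particular closed superset $(G,E)^{c}$, so $\overline{(F,E)}^{\mathcal{T}_{2}}\tilde{\subset}(G,E)^{c}$. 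Complementing this inclusion (inclusion is order-reversed under complementation) yields $(G,E)\tilde{\subset}(\overline{(F,E)}^{\mathcal{T}_{2}})^{c}=\tilde{X}-\overline{(F,E)}^{\mathcal{T}_{2}}$, and since $y\in(G,E)$ I conclude $y\in\tilde{X}-\overline{(F,E)}^{\mathcal{T}_{2}}$, which is exactly condition $(2)$.

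For $(2)\Rightarrow(1)$, I would take distinct $x,y\in X$ and the soft set $(F,E)\in\mathcal{T}_{1}$ supplied by $(2)$, then define $(G,E):=\tilde{X}-\overline{(F,E)}^{\mathcal{T}_{2}}$. Since the soft closure is soft closed in $\mathcal{T}_{2}$, its complement $(G,E)$ belongs to $\mathcal{T}_{2}$, so $(G,E)$ is a genuine $\mathcal{T}_{2}$-soft open set, and $y\in(G,E)$ by hypothesis. Disjointness is then checked using $(F,E)\tilde{\subset}\overline{(F,E)}^{\mathcal{T}_{2}}$, which gives $(F,E)\cap(G,E)\tilde{\subset}(F,E)\cap(\tilde{X}-(F,E))=\Phi$, hence $(F,E)\cap(G,E)=\Phi$. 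Together with $x\in(F,E)$ this furnishes the separating pair and establishes pair-wise soft Hausdorffness.

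The step I expect to require the most care is the bookkeeping around soft complements and closures: verifying that minimality of the soft closure transfers the inclusion $(F,E)\tilde{\subset}(G,E)^{c}$ to $\overline{(F,E)}^{\mathcal{T}_{2}}\tilde{\subset}(G,E)^{c}$, and that complementation reverses soft inclusion (relying on the parameter-wise de Morgan behaviour recorded in the earlier proposition on complements). None of this is deep, but one must be scrupulous about which topology is in play, since the closure is taken with respect to $\mathcal{T}_{2}$ while the first separating set comes from $\mathcal{T}_{1}$; silently conflating the two topologies would invalidate the argument.
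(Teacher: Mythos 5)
Your proposal is correct and follows essentially the same route as the paper's own proof: both directions use the translation between disjointness and the inclusion $(F,E)\tilde{\subset}(G,E)^{c}$, the minimality of the soft closure against the closed set $(G,E)^{c}$, and in the converse direction the choice $(G,E)=\tilde{X}-\overline{(F,E)}^{\mathcal{T}_{2}}$. Your closing remark about tracking which topology governs the closure is exactly the point of care the paper's argument also relies on.
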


\begin{corollary}
\label{cor1}Let $(X,\mathcal{T}_{1},\mathcal{T}_{2},E)$ be a pair-wise soft $%
T_{2}-$space over $X$. Then, for each $x\in X$,%
\begin{equation*}
(x,E)=\dbigcap \left\{ \overline{(F,E)}^{\mathcal{T}_{2}}:x\in (F,E)\in 
\mathcal{T}_{1}\right\} \text{.}
\end{equation*}

\begin{proof}
Let $x\in X$, the existence of a soft open set $x\in (F,E)\in \mathcal{T}%
_{1} $ is guaranteed by pair-wise soft Hausdorffness. If $y\in X$ such that $%
y\neq x$ then, by Theorem \ref{thm1},\ there exists a soft set $(F,E)\in 
\mathcal{T}_{1}$ such that $x\in (F,E)$ and $y\in \tilde{X}-\overline{(F,E)}%
^{\mathcal{T}_{2}}\Rightarrow y\notin (\bar{F}^{\mathcal{T}%
_{2}}(e))\Rightarrow y\notin \underset{x\in (F,E)\in \mathcal{T}_{1}}{%
\dbigcap }(\bar{F}^{\mathcal{T}_{2}}(e))$ for all $e\in E$. Therefore%
\begin{equation*}
\dbigcap \left\{ \overline{(F,E)}^{\mathcal{T}_{2}}:x\in (F,E)\in \mathcal{T}%
_{1}\right\} \tilde{\subset}(x,E)\text{.}
\end{equation*}%
Converse inclusion is obvious as $x\in (F,E)\tilde{\subset}\overline{(F,E)}^{%
\mathcal{T}_{2}}$.
\end{proof}
\end{corollary}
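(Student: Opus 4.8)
The plan is to prove the soft-set equality by establishing the two soft inclusions separately, viewing the right-hand side as a single soft set whose $e$-component is $\bigcap_{x\in (F,E)\in \mathcal{T}_{1}}\overline{F}^{\mathcal{T}_{2}}(e)$ for each $e\in E$, and then comparing coordinatewise against $(x,E)$, which has $e$-component $\{x\}$.

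First I would dispose of the inclusion $(x,E)\widetilde{\subset }\bigcap \{\overline{(F,E)}^{\mathcal{T}_{2}}:x\in (F,E)\in \mathcal{T}_{1}\}$. The indexing family is nonempty, since $\widetilde{X}\in \mathcal{T}_{1}$ and $x\in \widetilde{X}$, so the intersection is well defined. For every $(F,E)\in \mathcal{T}_{1}$ with $x\in (F,E)$, the defining property of the soft closure (Definition \ref{softcl}) gives $(F,E)\widetilde{\subset }\overline{(F,E)}^{\mathcal{T}_{2}}$, whence $x\in \overline{(F,E)}^{\mathcal{T}_{2}}$, i.e. $x\in \overline{F}^{\mathcal{T}_{2}}(e)$ for all $e\in E$. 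As this holds for every member of the family, $x$ lies in the $e$-component of the intersection for each $e$, which is precisely this inclusion.

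The substance of the argument is the reverse inclusion, and here I would invoke the implication $(1)\Rightarrow (2)$ of Theorem \ref{thm1}. Fix a point $y\neq x$. Since the space is pair-wise soft $T_{2}$ (hence pair-wise soft Hausdorff), Theorem \ref{thm1} produces a single soft set $(F,E)\in \mathcal{T}_{1}$ with $x\in (F,E)$ and $y\in \widetilde{X}-\overline{(F,E)}^{\mathcal{T}_{2}}$. Unpacking the soft-membership convention, this says $y\notin \overline{F}^{\mathcal{T}_{2}}(e)$ for \emph{every} $e\in E$. Because $\bigcap_{x\in (F,E)\in \mathcal{T}_{1}}\overline{F}^{\mathcal{T}_{2}}(e)\subseteq \overline{F}^{\mathcal{T}_{2}}(e)$, this one witness removes $y$ from every coordinate of the intersection, so $y$ belongs to no $e$-component. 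Hence each $e$-component of the intersection is contained in $\{x\}$, giving $\bigcap \{\overline{(F,E)}^{\mathcal{T}_{2}}:x\in (F,E)\in \mathcal{T}_{1}\}\widetilde{\subset }(x,E)$.

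I expect the only delicate point to be the bookkeeping with soft-membership: $z\in (H,E)$ demands $z\in H(\alpha )$ for \emph{all} $\alpha $, whereas $z\notin (H,E)$ needs failure at \emph{some} $\alpha $. The strength of Theorem \ref{thm1} is exactly that it furnishes exclusion of $y$ in all coordinates simultaneously, which is what allows a single $\mathcal{T}_{1}$-open witness to eliminate $y$ from the coordinatewise intersection and to force each $e$-component down to $\{x\}$; aligning this coordinatewise reasoning with the global soft-set equality is the main thing to get right. Combining the two inclusions yields the stated identity.
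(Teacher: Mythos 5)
Your proposal is correct and follows essentially the same route as the paper: the easy inclusion comes from $x\in (F,E)\widetilde{\subset }\overline{(F,E)}^{\mathcal{T}_{2}}$, and the substantive inclusion uses the implication $(1)\Rightarrow (2)$ of Theorem \ref{thm1} to produce, for each $y\neq x$, a single $\mathcal{T}_{1}$-open witness whose $\mathcal{T}_{2}$-closure excludes $y$ in every coordinate. Your extra care with the soft-membership bookkeeping and the nonemptiness of the indexing family (via $\widetilde{X}\in \mathcal{T}_{1}$) only tightens the paper's argument.
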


\begin{corollary}
Let $(X,\mathcal{T}_{1},\mathcal{T}_{2},E)$ be a pair-wise soft $T_{2}-$%
space over $X$. Then, for each $x\in X$, $(x,E)^{c}\in \mathcal{T}_{i}$ for $%
i=1,2$.

\begin{proof}
By Corollary \ref{cor1}%
\begin{equation*}
(x,E)^{c}=\dbigcup \left\{ (\overline{(F,E)}^{\mathcal{T}_{2}})^{c}:x\in
(F,E)\in \mathcal{T}_{1}\right\} \text{.}
\end{equation*}%
Since $\overline{(F,E)}^{\mathcal{T}_{2}}$ is a soft closed set in $\mathcal{%
T}_{2}$ so $(\overline{(F,E)}^{\mathcal{T}_{2}})^{c}\in \mathcal{T}_{2}$ and
by the axiom of a soft topological space $\dbigcup \left\{ (\overline{(F,E)}%
^{\mathcal{T}_{2}})^{c}:x\in (F,E)\in \mathcal{T}_{1}\right\} \in \mathcal{T}%
_{2}$. Thus $(x,E)^{c}\in \mathcal{T}_{2}$.

A similar argument holds to show $(x,E)^{c}\in \mathcal{T}_{1}$.
\end{proof}
\end{corollary}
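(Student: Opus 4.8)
The plan is to complement the identity established in Corollary \ref{cor1} and then invoke the union axiom of a soft topology. First I would take the relative complement of both sides of
\[
(x,E)=\dbigcap \left\{ \overline{(F,E)}^{\mathcal{T}_{2}}:x\in (F,E)\in \mathcal{T}_{1}\right\}.
\]
By De Morgan's laws for soft sets (the Proposition following the definition of the soft complement, extended from two soft sets to an arbitrary family in the obvious way), the complement of this intersection is the union
\[
(x,E)^{c}=\dbigcup \left\{ (\overline{(F,E)}^{\mathcal{T}_{2}})^{c}:x\in (F,E)\in \mathcal{T}_{1}\right\}.
\]

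Next I would argue that each member of this union is $\mathcal{T}_{2}$-soft open. Indeed $\overline{(F,E)}^{\mathcal{T}_{2}}$ is by its very construction a soft closed set in the soft topological space $(X,\mathcal{T}_{2},E)$, so that its relative complement $(\overline{(F,E)}^{\mathcal{T}_{2}})^{c}$ belongs to $\mathcal{T}_{2}$. Since a soft topology is closed under arbitrary unions of its members, the displayed union lies in $\mathcal{T}_{2}$, which gives $(x,E)^{c}\in \mathcal{T}_{2}$.

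For the case $i=1$ I would exploit the symmetry of the pair-wise soft $T_{2}$ axiom: applying the Hausdorff separation to the ordered pair $(y,x)$ rather than to $(x,y)$ interchanges the roles of $\mathcal{T}_{1}$ and $\mathcal{T}_{2}$, so that both Theorem \ref{thm1} and hence Corollary \ref{cor1} remain valid with the two soft topologies swapped. This yields the companion identity
\[
(x,E)=\dbigcap \left\{ \overline{(G,E)}^{\mathcal{T}_{1}}:x\in (G,E)\in \mathcal{T}_{2}\right\},
\]
and repeating the complement-and-union argument verbatim, now using that each $(\overline{(G,E)}^{\mathcal{T}_{1}})^{c}$ lies in $\mathcal{T}_{1}$, establishes $(x,E)^{c}\in \mathcal{T}_{1}$.

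The only genuinely delicate point is this symmetric restatement of Corollary \ref{cor1} invoked in the last step; everything else is a mechanical application of De Morgan together with the union axiom. I therefore expect the main obstacle to be verifying that the Hausdorff hypothesis is truly symmetric in the two soft topologies, so that the swapped forms of Theorem \ref{thm1} and Corollary \ref{cor1} are legitimately available rather than requiring independent proofs. Once that symmetry is recorded, the two inclusions follow by the same computation.
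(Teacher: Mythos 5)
Your proposal is correct and follows essentially the same route as the paper's proof: complement the identity of Corollary \ref{cor1} via De Morgan, observe that each $(\overline{(F,E)}^{\mathcal{T}_{2}})^{c}$ is $\mathcal{T}_{2}$-soft open, and invoke closure of a soft topology under arbitrary unions. Your explicit justification of the $i=1$ case via the symmetry of the pair-wise Hausdorff axiom (applying it to the pair $(y,x)$ to obtain the swapped forms of Theorem \ref{thm1} and Corollary \ref{cor1}) is exactly what the paper compresses into ``a similar argument,'' and is a sound reading of that step.
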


\section{Application of bi-soft topological spaces to rough sets\label{appli}%
}

Rough set theory introduced by Pawlak \cite{Pawlak} is another mathematical
tool to deal with uncertainty. These concepts have been applied successfully
in various fields \cite{Pawlak 2}. In the present paper a new approach for
rough approximations of a soft set is given and some properties of lower and
upper approximations are studied. A bi-soft topological space is applied to
granulate the universe of discourse and a general model of bi-soft
topological spaces based roughness of a soft set is established.

It is easy to see that for any soft set $\left( F,A\right) $ over a set $X$,
the set of parameters $A$ can be extended to $E$ by defining the following
map $\widetilde{F}:E\rightarrow P\left( U\right) $ 
\begin{equation*}
\widetilde{F}\left( e\right) =\left\{ 
\begin{tabular}{ll}
$F\left( a\right) $ & if $a\in A$ \\ 
$\emptyset $ & if $a\in E-A$%
\end{tabular}%
\right.
\end{equation*}%
In the following, a technique is developed to find approximations of a soft
set $\left( F,A\right) $ with respect to a bi-soft topological space $(X,%
\mathcal{T}_{1},\mathcal{T}_{2},E)$. From here onward every soft set $\left(
F,A\right) $ will represented by $(\widetilde{F},E)$.

\begin{definition}
Let $(X,\mathcal{T}_{1},\mathcal{T}_{2},E)$ be a bi-soft topological space
over $X$. Then $(X,\mathcal{T}_{1e},\mathcal{T}_{2e})$ is a \textit{%
bi-topological space} for each $e\in E$. Given a soft set $(\widetilde{F},E)$
over $X$, two soft sets $(\widetilde{\underline{F}}_{\mathcal{T}_{1},%
\mathcal{T}_{2}},E)$ and $(\overline{\widetilde{F}}_{\mathcal{T}_{1},%
\mathcal{T}_{2}},E)$\ are defined as:%
\begin{eqnarray*}
\widetilde{\underline{F}}_{\mathcal{T}_{1},\mathcal{T}_{2}}(e) &=&\mathcal{T}%
_{1e}Int(\widetilde{F}(e))\cap \mathcal{T}_{2e}Int(\widetilde{F}(e)) \\
\overline{\widetilde{F}}_{\mathcal{T}_{1},\mathcal{T}_{2}}(e) &=&\overline{%
\widetilde{F}(e)}^{\mathcal{T}_{1e}}\cup \overline{\widetilde{F}(e)}^{%
\mathcal{T}_{2e}}
\end{eqnarray*}%
for each $e\in E$, where $\mathcal{T}_{ie}Int(\widetilde{F}(e))$ and $%
\overline{\widetilde{F}(e)}^{\mathcal{T}_{ie}}$ denote the interior and
closure of subset $\widetilde{F}(e)$ in the topological space $(X,\mathcal{T}%
_{ie})$ respectively. The soft sets $(\widetilde{\underline{F}}_{\mathcal{T}%
_{1},\mathcal{T}_{2}},E)$ and $(\overline{\widetilde{F}}_{\mathcal{T}_{1},%
\mathcal{T}_{2}},E)$ for called respectively, the lower approximation and
upper approximation of the soft set $(\widetilde{F},E)$ with respect to
bi-soft topological space $(X,\mathcal{T}_{1},\mathcal{T}_{2},E)$ over $X$.
\end{definition}

\begin{definition}
If $(\widetilde{\underline{F}}_{\mathcal{T}_{1},\mathcal{T}_{2}},E)\tilde{=}(%
\overline{\widetilde{F}}_{\mathcal{T}_{1},\mathcal{T}_{2}},E)$ (soft equal)
then the soft set $(\widetilde{F},E)$ is said to be definable and otherwise
it is called a bi-soft topological rough set denoted by the pair $(%
\widetilde{\underline{F}}_{\mathcal{T}_{1},\mathcal{T}_{2}},\overline{%
\widetilde{F}}_{\mathcal{T}_{1},\mathcal{T}_{2}})$. Further,%
\begin{eqnarray*}
pos_{_{\mathcal{T}_{1},\mathcal{T}_{2}}}(\widetilde{F},E) &=&(\widetilde{%
\underline{F}}_{\mathcal{T}_{1},\mathcal{T}_{2}},E); \\
neg_{_{\mathcal{T}_{1},\mathcal{T}_{2}}}(\widetilde{F},E) &=&(\overline{%
\widetilde{F}}_{\mathcal{T}_{1},\mathcal{T}_{2}},E)^{c}; \\
bnd_{_{\mathcal{T}_{1},\mathcal{T}_{2}}}(\widetilde{F},E) &=&(\overline{%
\widetilde{F}}_{\mathcal{T}_{1},\mathcal{T}_{2}},E)-(\widetilde{\underline{F}%
}_{\mathcal{T}_{1},\mathcal{T}_{2}},E).
\end{eqnarray*}%
In order to explain this idea the following example is given:
\end{definition}

\begin{example}
Let $X=\{x_{1},x_{2},x_{3},x_{4},x_{5}\}$ be the set of sample designs of
laptop covers and $E=\{$Red, Green, Blue$\}$ be the set of available colors.
Let us suppose that there are two groups of people. First group consists of $%
3$ members aging $20$, $25$, $28$ and the second group has members aging $35$%
, $42$, $45$. Both groups are asked to select the covers which they approve
according to their likeness and choice. Following the choices they have
made, we obtain $6$ soft sets, given by $(F_{1},E)$, $(F_{2},E)$, $(F_{3},E)$
for the members of first group and $(G_{1},E)$, $(G_{2},E)$, $(G_{3},E)$ for
the members of latter one. Let $\mathcal{T}_{1}$ and $\mathcal{T}_{2}$ be
the soft topologies generated by $(F_{1},E)$, $(F_{2},E)$, $(F_{3},E)$ and $%
(G_{1},E)$, $(G_{2},E)$, $(G_{3},E)$,%
\begin{eqnarray*}
\mathcal{T}_{1} &=&\{\Phi ,\widetilde{X}%
,(F_{1},E),(F_{2},E),(F_{3},E),(F_{4},E),(F_{5},E),(F_{6},E),(F_{7},E),(F_{8},E)\}%
\text{, and} \\
\mathcal{T}_{2} &=&\{\Phi ,\widetilde{X},(G_{1},E),(G_{2},E),(G_{3},E)\}%
\text{,}
\end{eqnarray*}%
where $(F_{1},E),$ $(F_{2},E),$ $(F_{3},E),$ $(F_{4},E),$ $(F_{5},E),$ $%
(F_{6},E),$ $(F_{7},E),$ $(F_{8},E),$ $(G_{1},E),$ $(G_{2},E),$ $(G_{3},E)$
are soft sets over $X$, defined as follows:%
\begin{equation*}
\begin{array}{lll}
F_{1}(Red)=\{x_{2},x_{4}\}, & F_{1}(Green)=\{x_{1},x_{5}\}, & 
F_{1}(Blue)=\{x_{1}\}, \\ 
F_{2}(Red)=\{x_{1},x_{2},x_{4}\}, & F_{2}(Green)=\{x_{1},x_{2},x_{5}\}, & 
F_{2}(Blue)=\{x_{1},x_{3}\}, \\ 
F_{3}(Red)=\{x_{2}\}, & F_{3}(Green)=\{x_{2}\}, & F_{3}(Blue)=\{x_{2}\}, \\ 
F_{4}(Red)=\{x_{2}\}, & F_{4}(Green)=\{x_{2}\}, & F_{4}(Blue)=\emptyset , \\ 
F_{5}(Red)=\{x_{2},x_{4}\}, & F_{5}(Green)=\{x_{1},x_{2},x_{5}\}, & 
F_{5}(Blue)=\{x_{1},x_{2}\}, \\ 
F_{6}(Red)=\{x_{2},x_{4}\}, & F_{6}(Green)=\{x_{1},x_{2},x_{5}\}, & 
F_{6}(Blue)=\{x_{1}\}, \\ 
F_{7}(Red)=\{x_{2}\}, & F_{7}(Green)=\emptyset , & F_{7}(Blue)=\emptyset ,
\\ 
F_{8}(Red)=\{x_{1},x_{2},x_{4}\}, & F_{8}(Green)=\{x_{1},x_{2},x_{5}\}, & 
F_{8}(Blue)=\{x_{1},x_{2},x_{3}\}.%
\end{array}%
\end{equation*}%
and 
\begin{equation*}
\begin{array}{lll}
G_{1}(Red)=\{x_{1},x_{2},x_{4}\}, & G_{1}(Green)=\{x_{2},x_{4},x_{5}\}, & 
G_{1}(Blue)=\{x_{1},x_{2},x_{3}\}, \\ 
G_{2}(Red)=\{x_{2},x_{4}\}, & G_{2}(Green)=\{x_{4}\}, & G_{2}(Blue)=\{x_{2}%
\}, \\ 
G_{3}(Red)=\{x_{1}\}, & G_{3}(Green)=\{x_{2},x_{5}\}, & G_{3}(Blue)=%
\{x_{1},x_{3}\}.%
\end{array}%
\end{equation*}%
Then $\mathcal{T}_{1}$\ and $\mathcal{T}_{2}$ are soft topologies on $X$.
Thus $(X,\mathcal{T}_{1},\mathcal{T}_{2},E)$ is a \textit{bi-soft
topological space over }$X$. We have%
\begin{eqnarray}
\mathcal{T}_{1Red} &=&\{\emptyset
,X,\{x_{2}\},\{x_{2},x_{4}\},\{x_{1},x_{2},x_{4}\}\}, \\
\mathcal{T}_{2Red} &=&\{\emptyset
,X,\{x_{1}\},\{x_{2},x_{4}\},\{x_{1},x_{2},x_{4}\}\},  \notag
\end{eqnarray}%
\begin{eqnarray}
\mathcal{T}_{1Green} &=&\{\emptyset
,X,\{x_{2}\},\{x_{1},x_{2}\},\{x_{1},x_{2},x_{5}\}\}, \\
\mathcal{T}_{2Green} &=&\{\emptyset
,X,\{x_{4}\},\{x_{2},x_{5}\},\{x_{2},x_{4},x_{5}\}\},  \notag
\end{eqnarray}%
\begin{eqnarray}
\mathcal{T}_{2Blue} &=&\{\emptyset
,X,\{x_{1}\},\{x_{2}\},\{x_{1},x_{3}\},\{x_{1},x_{2}\},\{x_{1},x_{2},x_{3}\}%
\}, \\
\mathcal{T}_{2Blue} &=&\{\emptyset
,X,\{x_{2}\},\{x_{1},x_{3}\},\{x_{1},x_{2},x_{3}\}\}.  \notag
\end{eqnarray}%
Consider the soft set $(\widetilde{F},E)$ over $X$ that describes the choice
of a random customer Mr. X, whose age is in the range of\ $20-45$, where%
\begin{equation*}
\begin{array}{lll}
\widetilde{F}(Red)=\{x_{2},x_{4},x_{5}\}, & \widetilde{F}(Green)=\emptyset ,
& \widetilde{F}(Blue)=\{x_{1},x_{3},x_{4}\}.%
\end{array}%
\end{equation*}%
The lower approximation $(\widetilde{\underline{F}}_{\mathcal{T}_{1},%
\mathcal{T}_{2}},E)$ and upper approximation $(\overline{\widetilde{F}}_{%
\mathcal{T}_{1},\mathcal{T}_{2}},E)$ of the soft set $(\widetilde{F},E)$
with respect to bi-soft topological space $(X,\mathcal{T}_{1},\mathcal{T}%
_{2},E)$ over $X$ is computed as:%
\begin{equation*}
\begin{array}{lll}
\widetilde{\underline{F}}_{\mathcal{T}_{1},\mathcal{T}_{2}}(Red)=%
\{x_{2},x_{4}\}, & \widetilde{\underline{F}}_{\mathcal{T}_{1},\mathcal{T}%
_{2}}(Green)=\emptyset , & \widetilde{\underline{F}}_{\mathcal{T}_{1},%
\mathcal{T}_{2}}(Blue)=\{x_{1},x_{3}\}, \\ 
\overline{\widetilde{F}}_{\mathcal{T}_{1},\mathcal{T}_{2}}(Red)=X, & 
\overline{\widetilde{F}}_{\mathcal{T}_{1},\mathcal{T}_{2}}(Green)=\emptyset ,
& \overline{\widetilde{F}}_{\mathcal{T}_{1},\mathcal{T}_{2}}(Blue)=%
\{x_{1},x_{3},x_{4},x_{5}\}.%
\end{array}%
\end{equation*}%
Thus%
\begin{eqnarray*}
pos_{_{\mathcal{T}_{1},\mathcal{T}_{2}}}(\widetilde{F},E) &=&(\widetilde{%
\underline{F}}_{\mathcal{T}_{1},\mathcal{T}_{2}},E); \\
&&%
\begin{array}{l}
\widetilde{\underline{F}}_{\mathcal{T}_{1},\mathcal{T}_{2}}(Red)=%
\{x_{2},x_{4}\}, \\ 
\widetilde{\underline{F}}_{\mathcal{T}_{1},\mathcal{T}_{2}}(Green)=\emptyset
, \\ 
\widetilde{\underline{F}}_{\mathcal{T}_{1},\mathcal{T}_{2}}(Blue)=%
\{x_{1},x_{3}\},%
\end{array}
\\
neg_{_{\mathcal{T}_{1},\mathcal{T}_{2}}}(\widetilde{F},E) &=&(\overline{%
\widetilde{F}}_{\mathcal{T}_{1},\mathcal{T}_{2}},E)^{c}; \\
&&%
\begin{array}{l}
\overline{\widetilde{F}}_{\mathcal{T}_{1},\mathcal{T}_{2}}^{c}(Red)=%
\emptyset , \\ 
\overline{\widetilde{F}}_{\mathcal{T}_{1},\mathcal{T}_{2}}^{c}(Green)=X, \\ 
\overline{\widetilde{F}}_{\mathcal{T}_{1},\mathcal{T}_{2}}^{c}(Blue)=\{x_{2}%
\},%
\end{array}
\\
bnd_{_{\mathcal{T}_{1},\mathcal{T}_{2}}}(\widetilde{F},E) &=&(\overline{%
\widetilde{F}}_{\mathcal{T}_{1},\mathcal{T}_{2}},E)-(\widetilde{\underline{F}%
}_{\mathcal{T}_{1},\mathcal{T}_{2}},E); \\
&&%
\begin{array}{l}
(\overline{\widetilde{F}}_{\mathcal{T}_{1},\mathcal{T}_{2}}-\widetilde{%
\underline{F}}_{\mathcal{T}_{1},\mathcal{T}_{2}})(Red)=\{x_{1},x_{3},x_{5}\},
\\ 
(\overline{\widetilde{F}}_{\mathcal{T}_{1},\mathcal{T}_{2}}-\widetilde{%
\underline{F}}_{\mathcal{T}_{1},\mathcal{T}_{2}})(Green)=\emptyset , \\ 
(\overline{\widetilde{F}}_{\mathcal{T}_{1},\mathcal{T}_{2}}-\widetilde{%
\underline{F}}_{\mathcal{T}_{1},\mathcal{T}_{2}})(Blue)=\{x_{4},x_{5}\},%
\end{array}%
\end{eqnarray*}
From these approximations, we may assert the following:

\begin{enumerate}
\item For Mr. X, $x_{2}$ or $x_{4}$ will be best choices in red color, no
design will be selected in green color and, $x_{1}$ and $x_{3}$ should be
preferred if the color choice is blue.

\item No design in red color can be considered as a bad choice, no design
can be selected in green color and, $x_{2}$ should not be selected in blue
color.

\item $x_{1}$, $x_{3}$, $x_{5}$ may be chosen in red color and, $x_{4}$ and $%
x_{5}$ are also considerable in case the color choice is blue.
\end{enumerate}
\end{example}

\begin{theorem}
Let $(X,\mathcal{T}_{1},\mathcal{T}_{2},E)$ be a bi-soft topological space
and $\left( \widetilde{F},E\right) $, $\left( \widetilde{F}_{1},E\right) $, $%
\left( \widetilde{F}_{2},E\right) $ be soft sets over $X$. Then:%
\begin{equation*}
\begin{tabular}{ll}
$1$ & $(\widetilde{\underline{F}}_{\mathcal{T}_{1},\mathcal{T}_{2}},E)\tilde{%
\subset}(\widetilde{F},E)\tilde{\subset}(\overline{\widetilde{F}}_{\mathcal{T%
}_{1},\mathcal{T}_{2}},E),$ \\ 
$2$ & $\underline{\Phi }_{\mathcal{T}_{1},\mathcal{T}_{2}}\tilde{=}\Phi 
\tilde{=}\overline{\Phi }_{\mathcal{T}_{1},\mathcal{T}_{2}},$ \\ 
$3$ & $\underline{\widetilde{X}}_{\mathcal{T}_{1},\mathcal{T}_{2}}\tilde{=}%
\widetilde{X}\tilde{=}\overline{\widetilde{X}}_{\mathcal{T}_{1},\mathcal{T}%
_{2}},$ \\ 
$4$ & $(\widetilde{\underline{F}}_{\mathcal{T}_{1},\mathcal{T}_{2}},E)\cap (%
\widetilde{\underline{F}}_{\mathcal{T}_{1},\mathcal{T}_{2}},E)_{\mathcal{T}%
_{1},\mathcal{T}_{2}}\tilde{=}(\widetilde{\underline{F}}_{\mathcal{T}_{1},%
\mathcal{T}_{2}},E)$ \\ 
$5$ & $(\overline{\widetilde{F}_{1}\cap \widetilde{F}_{2}}_{_{\mathcal{T}%
_{1},\mathcal{T}_{2}}},E)\tilde{\subset}(\overline{\widetilde{F}_{1}}_{%
\mathcal{T}_{1},\mathcal{T}_{2}},E)\cap (\overline{\widetilde{F}_{2}}_{%
\mathcal{T}_{1},\mathcal{T}_{2}},E),$ \\ 
$6$ & $(\widetilde{\underline{F}}_{\mathcal{T}_{1},\mathcal{T}_{2}},E)\cup (%
\widetilde{\underline{F}}_{\mathcal{T}_{1},\mathcal{T}_{2}},E)_{\mathcal{T}%
_{1},\mathcal{T}_{2}}\tilde{\subset}(\widetilde{\underline{F}}_{\mathcal{T}%
_{1},\mathcal{T}_{2}},E),$ \\ 
$7$ & $(\overline{\widetilde{F}_{1}\cup \widetilde{F}_{2}}_{_{\mathcal{T}%
_{1},\mathcal{T}_{2}}},E)\tilde{=}(\overline{\widetilde{F}_{1}}_{\mathcal{T}%
_{1},\mathcal{T}_{2}},E)\cup (\overline{\widetilde{F}_{2}}_{\mathcal{T}_{1},%
\mathcal{T}_{2}},E),$ \\ 
$8$ & $(\widetilde{F}_{1},E)\tilde{\subset}(\widetilde{F}_{2},E)\Rightarrow (%
\widetilde{\underline{F}}_{_{\mathcal{T}_{1},\mathcal{T}_{2}}},E)\tilde{%
\subset}(\widetilde{\underline{F}}_{_{\mathcal{T}_{1},\mathcal{T}_{2}}},E),(%
\overline{\widetilde{F}_{1}}_{\mathcal{T}_{1},\mathcal{T}_{2}},E)\tilde{%
\subset}(\overline{\widetilde{F}_{2}}_{\mathcal{T}_{1},\mathcal{T}_{2}},E),$
\\ 
$9$ & $(\overline{\widetilde{F}}_{\mathcal{T}_{1},\mathcal{T}_{2}},E)\tilde{=%
}((\widetilde{\underline{F}}_{_{\mathcal{T}_{1},\mathcal{T}%
_{2}}},E)^{c})^{c},$ \\ 
$10$ & $((\overline{\widetilde{F}}_{\mathcal{T}_{1},\mathcal{T}%
_{2}},E)^{c})^{c}\tilde{=}(\widetilde{\underline{F}}_{_{\mathcal{T}_{1},%
\mathcal{T}_{2}}},E),$ \\ 
$11$ & $(\underline{(\underline{F}_{_{\mathcal{T}_{1},\mathcal{T}_{2}}})}_{_{%
\mathcal{T}_{1},\mathcal{T}_{2}}},E)\tilde{=}(\widetilde{\underline{F}}_{_{%
\mathcal{T}_{1},\mathcal{T}_{2}}},E),$ \\ 
$12$ & $(\overline{(\overline{\widetilde{F}}_{_{\mathcal{T}_{1},\mathcal{T}%
_{2}}})}_{_{\mathcal{T}_{1},\mathcal{T}_{2}}},E)\tilde{=}(\overline{%
\widetilde{F}}_{\mathcal{T}_{1},\mathcal{T}_{2}},E).$%
\end{tabular}%
\end{equation*}
\end{theorem}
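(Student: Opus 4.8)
The unifying observation is that both approximation operators are defined \emph{coordinate-wise}: for every parameter $e\in E$ the value $\widetilde{\underline{F}}_{\mathcal{T}_{1},\mathcal{T}_{2}}(e)$ is assembled only from the ordinary interiors $\mathrm{Int}_{1}:=\mathcal{T}_{1e}\mathrm{Int}$ and $\mathrm{Int}_{2}:=\mathcal{T}_{2e}\mathrm{Int}$ of the single subset $\widetilde{F}(e)\subseteq X$, and dually $\overline{\widetilde{F}}_{\mathcal{T}_{1},\mathcal{T}_{2}}(e)$ from the two closures $\overline{(\cdot)}^{\mathcal{T}_{1e}}$, $\overline{(\cdot)}^{\mathcal{T}_{2e}}$. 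Since soft inclusion and soft equality are tested parameter-by-parameter, each of the twelve clauses reduces to a set-theoretic statement inside the crisp bitopological space $(X,\mathcal{T}_{1e},\mathcal{T}_{2e})$ supplied by Proposition \ref{Propref}. So the plan is to fix an arbitrary $e$, verify the corresponding identity or inclusion in $X$, and then observe that, since it holds for all $e$, it yields the soft statement.

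With this reduction, clauses 1--3 are immediate from $\mathrm{Int}(A)\subseteq A\subseteq\overline{A}$ and from $\mathrm{Int}(\emptyset)=\overline{\emptyset}=\emptyset$, $\mathrm{Int}(X)=\overline{X}=X$ in each topology, together with the definitions of $\Phi$ and $\widetilde{X}$. For the lattice clauses I would use that in a single topology interior distributes over finite intersections and closure over finite unions: this gives clause 7 (upper of a union equals the union of the uppers) and the intersection identity intended by clause 4, while clauses 5 and 6 are the two one-sided inclusions that follow from monotonicity of $\mathrm{Int}_{i}$ and $\overline{(\cdot)}^{\mathcal{T}_{ie}}$. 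Monotonicity (clause 8) itself comes from applying each operator to the inclusion $\widetilde{F}_{1}(e)\subseteq\widetilde{F}_{2}(e)$ and then intersecting, respectively unioning. The duality clauses 9 and 10 I would derive from the single-topology identity $\mathrm{Int}(A)=(\overline{A^{c}})^{c}$ in each $\mathcal{T}_{ie}$ together with De Morgan, which yields $\underline{(\widetilde{F}^{c})}(e)=(\overline{\widetilde{F}}(e))^{c}$; that is, the upper approximation is precisely the complement of the lower approximation of the complement.

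The genuine obstacle is the pair of idempotency clauses 11 and 12. In a single-topology or classical rough-set setting these are trivial because $\mathrm{Int}$ and $\overline{(\cdot)}$ are idempotent, but here $\widetilde{\underline{F}}_{\mathcal{T}_{1},\mathcal{T}_{2}}(e)=\mathrm{Int}_{1}(\widetilde{F}(e))\cap\mathrm{Int}_{2}(\widetilde{F}(e))$ is an intersection of a $\mathcal{T}_{1e}$-open set with a $\mathcal{T}_{2e}$-open set, and such an intersection is in general open in \emph{neither} topology. Hence re-applying the lower operator can strictly shrink it: expanding with the distributive and idempotency laws gives $\underline{(\underline{F})}(e)=\underline{F}(e)\cap\mathrm{Int}_{1}(\mathrm{Int}_{2}(\widetilde{F}(e)))\cap\mathrm{Int}_{2}(\mathrm{Int}_{1}(\widetilde{F}(e)))$, and the last two factors need not contain $\underline{F}(e)$, so the product can be properly smaller; dually the iterated closure in clause 12 can strictly enlarge the set. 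I therefore expect clauses 11 and 12 to be the step that does \emph{not} follow from single-topology idempotency alone: establishing them seems to require that $\widetilde{\underline{F}}_{\mathcal{T}_{1},\mathcal{T}_{2}}(e)$ actually lie in $\mathcal{T}_{1e}\cap\mathcal{T}_{2e}$ (and dually be closed in both topologies), i.e.\ a compatibility hypothesis between the two parameterized topologies. This is the clause I would scrutinize most carefully before asserting the identities in full generality.
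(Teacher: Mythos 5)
The paper states this theorem with no proof at all, so there is nothing of the authors' to compare your argument against; your parameter-wise reduction to the crisp bitopological spaces $(X,\mathcal{T}_{1e},\mathcal{T}_{2e})$ of Proposition \ref{Propref} is the natural route, and it correctly disposes of clauses 1--10 once the typographical duplications in clauses 4, 6, 8, 9 and 10 are repaired in the only sensible way (4 and 6 concern $\widetilde{F}_{1}$ and $\widetilde{F}_{2}$, and 9--10 assert the duality between the upper approximation and the complement of the lower approximation of the complement, which you verify correctly via $\mathrm{Int}_{i}(A^{c})=(\overline{A}^{\mathcal{T}_{ie}})^{c}$).

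Your suspicion about clauses 11 and 12 is not merely a caution: those clauses are false as stated, and your diagnosis of the cause is exactly right. Take $X=\{a,b,c,d\}$, $E=\{e\}$, with $\mathcal{T}_{1e}=\{\emptyset,X,\{a,b\}\}$ and $\mathcal{T}_{2e}=\{\emptyset,X,\{b,c\}\}$. For $\widetilde{F}(e)=\{a,b,c\}$ one gets $\widetilde{\underline{F}}_{\mathcal{T}_{1},\mathcal{T}_{2}}(e)=\{a,b\}\cap\{b,c\}=\{b\}$, while $\mathrm{Int}_{1}\{b\}=\mathrm{Int}_{2}\{b\}=\emptyset$, so the iterated lower approximation is $\Phi$, strictly smaller. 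Dually, for $\widetilde{F}(e)=\{d\}$ the upper approximation is $\{c,d\}\cup\{a,d\}=\{a,c,d\}$, whose closure in either topology is already all of $X$, so the iterated upper approximation strictly grows. The set $\mathrm{Int}_{1}(\widetilde{F}(e))\cap\mathrm{Int}_{2}(\widetilde{F}(e))$ is indeed open in neither topology in general, and idempotency would require exactly the kind of compatibility hypothesis you describe (for instance that the two parameterized topologies coincide, or that one refines the other). So your proof is correct wherever the theorem is correct, and the obstacle you flag is a genuine error in the statement rather than a gap in your argument.
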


\begin{conclusion}
The concept of soft topological spaces is generalized to bi-soft topological
spaces. Some basic notions and the inter-relations of classical and
generalized concepts have been studied in detail. It is worth mentioning
that the purpose of this paper is just to initiate the concept, and there is
a lot of scope for the researchers to make their investigations in this
field. This is a beginning of some new generalized structure and the concept
of separation axioms may be studied further for regular and normal bi-soft
topological spaces that is our next goal too. The topologies induced by the
definitions of information systems through rough sets$^{\text{\cite{Pawlak}}%
} $ give rise to a natural bitopological space on initial universal set and
this fact increases the interest in bitopological environment. The
connections of information systems and soft sets can be another key to
search for the structure of bi-soft topological spaces in real world
phenomenon. Separation axioms have an application in digital topology$^{%
\text{\cite{Kong}}}$ and a hybrid generalization of the axioms may also be
of some use there.
\end{conclusion}

\begin{summary}
The results on implications are summarized in Figure \ref{Figure 1}:
\end{summary}

\FRAME{ftbpFU}{2.6013in}{3.0407in}{0pt}{\Qcb{Summary of Results}}{\Qlb{%
Figure 1}}{Figure}{\special{language "Scientific Word";type
"GRAPHIC";maintain-aspect-ratio TRUE;display "USEDEF";valid_file "F";width
2.6013in;height 3.0407in;depth 0pt;original-width 2.5598in;original-height
2.9966in;cropleft "0";croptop "1";cropright "1";cropbottom "0";filename
'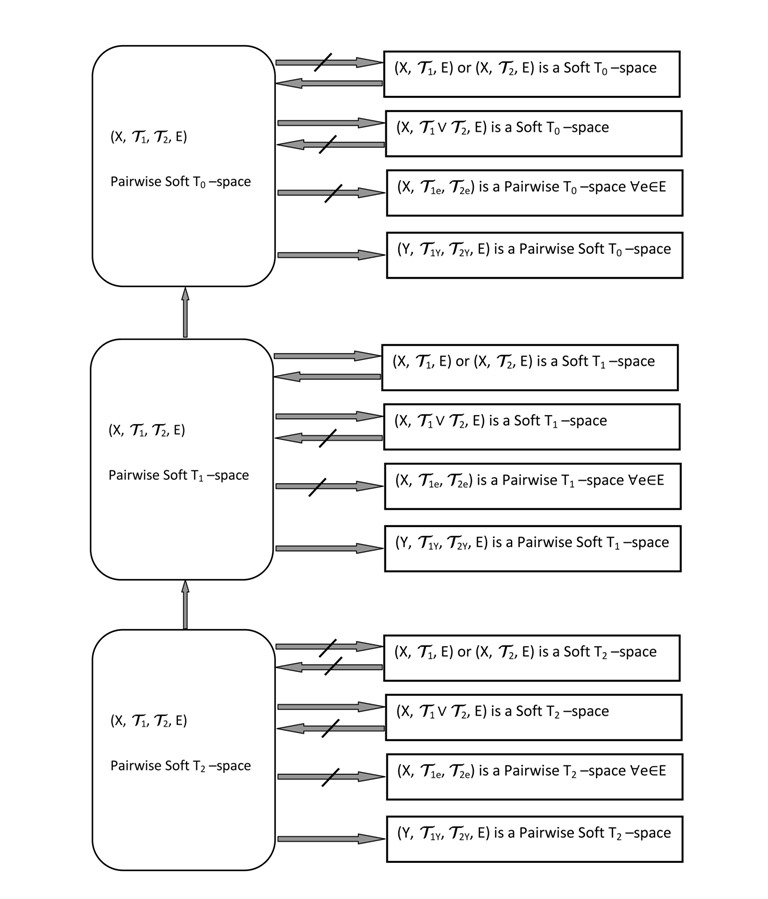';file-properties "XNPEU";}}

\end{document}